\DeclareMathOperator{\sign}{sign}
\DeclareMathOperator{\Ran}{Ran}
\DeclareMathOperator{\Dom}{Dom}
\DeclareMathOperator{\Ker}{Ker}
\DeclareMathOperator{\Tr}{Tr}
\DeclareMathOperator{\spec}{spec}
\DeclareMathOperator{\rank}{rank}
\DeclareMathOperator{\supp}{supp}
\DeclareMathOperator*{\slim}{s-lim}
\DeclareMathOperator*{\LIM}{l.i.m.}
\renewcommand\Im{\hbox{{\rm Im}}\,}
\renewcommand\Re{\hbox{{\rm Re}}\,}
\newcommand{\abs}[1]{\lvert#1\rvert}
\newcommand{\Abs}[1]{\left\lvert#1\right\rvert}
\newcommand{\norm}[1]{\lVert#1\rVert}
\newcommand{\bbR}{{\mathbb R}}
\newcommand{\bbC}{{\mathbb C}}
\newcommand{\bbN}{{\mathbb N}}
\newcommand{\calH}{{\mathcal H}}
\newcommand{\calF}{\mathcal{F}}
\newcommand{\calN}{\mathcal{N}}
\newcommand{\calZ}{\mathcal{Z}}
\newcommand{\calY}{\mathcal{Y}}
\newcommand{\calP}{\mathcal{P}}
\numberwithin{equation}{section}
\theoremstyle{plain}
\newtheorem{theorem}{\bf Theorem}[section]
\newtheorem{lemma}[theorem]{\bf Lemma}
\newtheorem{proposition}[theorem]{\bf Proposition}
\newtheorem{assumption}[theorem]{\bf Assumption}
\newtheorem{corollary}[theorem]{\bf Corollary}
\theoremstyle{definition}
\theoremstyle{remark}
\newtheorem*{remark*}{\bf Remark}
\newcommand{\wt}{\widetilde}
\newcommand{\eps}{\varepsilon}
\begin{document}

\title{Scattering matrix and functions of self-adjoint operators}

\author{Alexander Pushnitski}
\address{Department of Mathematics,
King's College London, 
Strand, London, WC2R~2LS, U.K.}
\email{alexander.pushnitski@kcl.ac.uk}

\subjclass[2000]{Primary 47A40; Secondary 47B25}

\keywords{Scattering matrix, Hankel operators, functions of self-adjoint operators}

\begin{abstract}
In the scattering theory framework, we consider a pair 
of operators $H_0$, $H$. 
For a continuous function $\varphi$
vanishing at infinity,  we set 
$\varphi_\delta(\cdot)=\varphi(\cdot/\delta)$
and study the spectrum of the difference 
$\varphi_\delta(H-\lambda)-\varphi_\delta(H_0-\lambda)$ 
for $\delta\to0$. 
We prove that if $\lambda$ is in the absolutely continuous spectrum of $H_0$ and $H$, 
then  the spectrum of this difference  
converges to a set that can be explicitly 
described in terms of (i) the eigenvalues of the scattering matrix $S(\lambda)$ for 
the pair $H_0$, $H$  and (ii) the singular values of the Hankel operator $H_\varphi$
with the symbol $\varphi$. 
\end{abstract}

\maketitle

\section{Introduction}\label{sec.a}

 \subsection{Informal description of the main result}\label{sec.a2}
Let $H_0$ and $H$ be self-adjoint operators in a Hilbert space $\calH$.
We assume that $H_0$ and $H$ are semibounded from below
and $H=H_0+V$ as a quadratic form sum, where 
$V$ is a self-adjont operator in $\calH$ such that 
$\abs{V}=\sqrt{V^2}$ is $H_0$--form compact. 
Under these assumptions, the difference
of resolvents 
$(H-zI)^{-1}-(H_0-zI)^{-1}$ is compact for every 
non-real $z$ and the essential spectra 
of $H_0$ and $H$ coincide.

Let $C_0(\bbR)$ be the set of all continuous functions $\varphi:\bbR\to\bbR$
such that $\varphi(x)\to0$ as $\abs{x}\to\infty$.  
For  $\varphi\in C_0(\bbR)$ 
and $\delta>0$, we denote $\varphi_\delta(x)=\varphi(x/\delta)$. 
Fix $\lambda\in\bbR$ and consider the difference 
\begin{equation}
A(\delta)=\varphi_\delta(H-\lambda)-\varphi_\delta(H_0-\lambda).
\label{a1}
\end{equation}
Under our assumptions it is easy to see 
that the operator $A(\delta)$ is compact. 
In this paper, we study the spectrum of $A(\delta)$ for 
$\delta\to+0$. 

In Section~\ref{sec.b}
we make some assumptions typical for smooth scattering theory.
These assumptions
ensure that the (local) wave operators corresponding to the pair $H_0$, $H$
and some interval $\Delta$ in the absolutely continuous spectrum of $H_0$ 
exist and are complete. Thus, the scattering matrix $S(\lambda)$ 
is well defined for $\lambda\in\Delta$. 
For $\lambda\in\Delta$ 
we describe the limiting behaviour of the spectrum of $A(\delta)$ 
as $\delta\to+0$ explicitly in terms of 
\begin{enumerate}[(i)]
\item
the eigenvalues of the scattering matrix $S(\lambda)$;
\item 
the singular values 
of the (compact) Hankel operator $H_\varphi$ with the symbol $\varphi$.
\end{enumerate}
To give a general flavour of our result, let us consider the case
$\varphi(x)=1/(1+x^2)$. This case turns out to be particularly simple
as the operator  $H_\varphi$ has rank one. 
We prove that 
the spectrum of $A(\delta)$ converges (in some precise sense to be
specified in Section~\ref{sec.b}) to the set
\begin{equation}
\{
\pm\tfrac14\abs{s_n(\lambda)-1}: 
s_n(\lambda)\in\spec(S(\lambda))
\}.
\label{a2}
\end{equation}

We note that
the link between the properties of the difference 
$\varphi(H)-\varphi(H_0)$ and the theory of Hankel operators
was first exhibited in the work \cite{Peller1} by V.~Peller. 
The question discussed in this paper gives another example 
of this link.

 \subsection{Connection to the Birman-Kre\u{\i}n formula}\label{sec.a1}

In  \cite{Krein}, M.~G.~Kre\u{\i}n has proved that
under some assumptions of the trace class type on the pair $H_0$ and $H$,
for all sufficiently smooth functions $\varphi\in C_0(\bbR)$ 
the operator $\varphi(H)-\varphi(H_0)$ belongs to the trace class and 
\begin{equation}
\Tr(\varphi(H)-\varphi(H_0))=\int_{-\infty}^\infty \varphi'(t) \xi(t) dt,
\label{aa1}
\end{equation}
where the function $\xi$ is known as the spectral shift function. 
The relation between the spectral shift function and 
the   scattering matrix $S(\lambda)$
for the pair $H_0$, $H$ was found later in the paper 
\cite{BK} by M.~Sh.~Birman and M.~G.~Kre\u{\i}n
(see also the previous work \cite{Lifshits,BuslaevFaddeev,Buslaev}):
\begin{equation}
\det S(\lambda)=e^{-2\pi i\xi(\lambda)}
\label{aa2}
\end{equation}
for almost every $\lambda$ in the  absolutely continuous   spectrum of $H_0$.

Obviously, the l.h.s. of \eqref{aa1} is the sum of the eigenvalues of 
$\varphi(H)-\varphi(H_0)$ and the l.h.s. of \eqref{aa2} is the product 
of the eigenvalues of $S(\lambda)$. 
Thus, \eqref{aa1} and \eqref{aa2} relate the spectrum of $\varphi(H)-\varphi(H_0)$
to the spectrum of the scattering matrix. The main result of this paper
gives an affirmative answer to the following 

\textbf{Question:} \emph{are there any other relationships between 
the spectrum of $\varphi(H)-\varphi(H_0)$ for smooth $\varphi$ 
and the spectrum of the
scattering matrix $S(\lambda)$ for the pair $H_0$, $H$?}

For discontinuous functions $\varphi$ the 
operator $A(\delta)$ may fail to be compact; see \cite[Section~6]{Krein}
and \cite{KM}.
In this case the essential spectrum of $A(\delta)$ 
can be explicitly described in terms of 
the spectrum of the scattering matrix; see \cite{Push,PushYaf}.
This fact is closely related to the subject of this work;
it gives another relationship between the spectra of $\varphi(H)-\varphi(H_0)$
and $S(\lambda)$.

\subsection{Acknowledgements}
The author is grateful to Yu.~Safarov and N.~Filonov for a number of useful remarks 
on the text of the paper.

\section{Main result}\label{sec.b}

\subsection{Notation and assumptions}\label{sec.b1}
For a self-adjoint operator $A$ and a Borel set $\Lambda\subset\bbR$, we denote 
by $E_A(\Lambda)$ the spectral projection 
of $A$  corresponding to $\Lambda$ and let 
$N_A(\Lambda)=\rank E_A(\Lambda)$. 
If $\Lambda=(a,b)$, we write $E_A(a,b)$, $N_A(a,b)$  rather than $E_A((a,b))$, 
$N_A((a,b))$  in order to make the formulas more readable.

We assume that $H_0$ is 
a semi-bounded from below self-adjoint operator in a Hilbert space
$\calH$, and $V$ is another operator in $\calH$ which is considered
as the perturbation of $H_0$. 
It will be convenient to represent $V$ in a factorised form: 
$V=GV_0 G$, where $G=\abs{V}^{1/2}$ and $V_0=\sign(V)$. 
We assume  that 
for any $\gamma<\inf\spec(H_0)$ one has 
\begin{equation}
\Dom (H_0-\gamma I)^{1/2} \subset \Dom G 
\quad  \text{ and } \quad
G(H_0-\gamma I)^{-1/2} \text{ is compact.}
\label{b1}
\end{equation}
It follows that $V$ is $H_0$-form compact, and therefore 
we can define the self-adjoint operator $H$ 
corresponding to  the form sum $H_0+V$
(see the ``KLMN Theorem''  \cite[Theorem~X.17]{RS2}). 

For $\Im z\not=0$, we set $R(z)=(H-zI)^{-1}$, $R_0(z)=(H_0-zI)^{-1}$. 
Let us define the ``sandwiched resolvent'' $T(z)$ formally by 
$$
T(z)=GR_0(z)G^*, 
\quad
\Im z\not=0;
$$
more precisely, this means
\begin{equation}
T(z)=(G(H_0-\gamma I)^{-1/2}) (H_0-\gamma I)R_0(z)(G(H_0-\gamma I)^{-1/2})^*
\label{b5}
\end{equation}
for any $\gamma <\inf\spec(H_0)$.
By \eqref{b1}, the operator $T(z)$ is  compact.

We fix a compact interval $\Delta\subset\bbR$ and assume that 
the spectrum of $H_0$ in $\Delta$ is purely absolutely continuous
with a constant multiplicity $N\leq\infty$. 
More explicitly, we assume that for some auxiliary Hilbert space $\calN$, 
$\dim\calN=N$, there exists a 
unitary operator $\calF$  from $\Ran E_{H_0}(\Delta)$ to 
$L^2(\Delta,\calN)$,  such that $\calF$
diagonalizes $H_0$: if $f\in\Ran E_{H_0}(\Delta)$ then 
\begin{equation}
(\calF  H_0 f)(\lambda)
=
\lambda (\calF f)(\lambda), \quad \lambda\in\Delta.
\label{b2}
\end{equation}

Next, we make an assumption typical for smooth scattering theory; 
in the terminology of \cite{Yafaev}, we
assume that $G$ is strongly $H_0$-smooth 
on $\Delta$ with some exponent $\alpha\in (0, 1]$.
This means that the operator 
\[
G_\Delta\overset{\rm def}{=}GE_{H_0}(\Delta) : \Ran E_{H_0}(\Delta) \to \calH
\] 
satisfies 
\begin{equation}
(\calF G_\Delta^*\psi)(\lambda)
=
Z(\lambda)\psi, 
\quad 
\forall \psi\in\calH,
\quad
\lambda\in\Delta,
\label{b3}
\end{equation}
where $Z=Z(\lambda):\calH\to\calN$
is a family of compact operators obeying
\begin{equation}
\norm{Z(\lambda)}\leq C,
\quad
\norm{Z(\lambda)-Z(\lambda')}\leq C\abs{\lambda-\lambda'}^\alpha,
\quad
\lambda, \lambda'\in\Delta.
\label{b4}
\end{equation}
Note that the notion of strong smoothness is not unitary invariant,
as it depends on the choice of the map $\calF$. 
It follows from \eqref{b3} that the  operator  
$G_\Delta$ acts according to the formula
\begin{equation}
G_\Delta f=
\int_{\Delta}Z(\lambda)^* F(\lambda) d\lambda, 
\quad 
F=\calF f.
\label{b4a}
\end{equation}

Let us summarize our assumptions:

\begin{assumption}\label{as1}
\begin{enumerate}[\rm (A)]
\item
$H=H_0+V$ (as a form sum), where $V=GV_0G$ satisfies \eqref{b1}.
\item
$H_0$ has a purely absolutely continuous spectrum with multiplicity $N$ on the interval $\Delta$.
\item
$G=\abs{V}^{1/2}$ is strongly $H_0$-smooth on $\Delta$, i.e.  \eqref{b3}, \eqref{b4} hold true.
\end{enumerate}
\end{assumption}

\subsection{Scattering theory}\label{sec.b2}

Recall that for a Borel set 
${\Lambda}\subset{\mathbb R}$, the (local) wave operators are introduced by the relation
\[
W_{\pm} (H,H_{0};\Lambda)=\slim_{t\to \pm \infty}e^{iHt}e^{-iH_{0}t}E_{H_0}(\Lambda) P_{H_0}^{(a)},
\]
provided these strong limits exist. 
Here and in what follows we denote by  $P_{H_0}^{(a)}$ the orthogonal projection
onto the absolutely continuous subspace   of $H_{0}$. 
If the wave operators are complete, i.e.  if the relations  
\[
\Ran W_{+} (H,H_{0};\Lambda)
=
\Ran W_{-} (H,H_{0};\Lambda)
=
\Ran \big(E_H (\Lambda) P^{(a)}_H\big)
\]
hold true, then 
the (local) scattering operator is defined as
\[
\mathbf{S}=\mathbf{S}(H,H_0;\Lambda)= W_+(H,H_0;\Lambda)^* W_-(H,H_0;\Lambda).
\]
The scattering operator $\mathbf{S}$
commutes with $H_{0}$ and is unitary on the subspace $\Ran( E_{H_0}(\Lambda)P_{H_0}^{(a)})$.

Below the interior of $\Delta$ is denoted by $ {\rm int}(\Delta)$. 
We need the following well-known results (see e.g. \cite[Section~4.4]{Yafaev}).

\begin{proposition}\label{pr1}
Let Assumption~\ref{as1} hold. Then:
\begin{enumerate}[\rm (i)]
\item
The operator-valued function $T(z)$   
defined by \eqref{b5}
is uniformly  H\"older continuous for $\Re z\in {\rm int}(\Delta)$, $\Im z> 0$; 
in particular, the limits $T(\lambda+i0)$ exist in the operator norm and 
are H\"older continuous in $\lambda\in {\rm int}(\Delta)$.
Let $\Omega\subset{\rm int}(\Delta)$ be the set 
where the equation 
$$
f+T(\lambda+ i0)V_0f=0
$$ 
has no non-trivial solutions. Then $\Omega$ 
is open and  $\Delta\setminus\Omega$
has the Lebesgue measure zero.
The inverse operator $(I+T(\lambda+i0)V_0)^{-1}$, $\lambda\in\Omega$, 
exists, is bounded and is a H\"older continuous function of  
$\lambda\in\Omega$.
\item
The local wave operators
$W_\pm(H,H_0;\Omega)$ exist and are complete. 
Moreover, the spectrum of $H$ in $\Omega$ 
is purely absolutely continuous.
\end{enumerate}
\end{proposition}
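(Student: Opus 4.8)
The plan is to derive Proposition~\ref{pr1} from the limiting absorption principle (part (i)) followed by the Kato--Birman theory of smooth perturbations (part (ii)), so that essentially all the analytic content sits in part (i). First I would obtain a workable representation of $T(z)$. Since $R_0(z)$ commutes with $E_{H_0}(\Delta)$, I split
\[
R_0(z)=E_{H_0}(\Delta)R_0(z)E_{H_0}(\Delta)+E_{H_0}(\bbR\setminus\Delta)R_0(z)E_{H_0}(\bbR\setminus\Delta)
\]
and feed this into \eqref{b5}. Using the diagonalization \eqref{b3} and the formula \eqref{b4a}, the first term becomes the Cauchy-type integral
\[
GE_{H_0}(\Delta)R_0(z)E_{H_0}(\Delta)G=\int_\Delta\frac{Z(\mu)^*Z(\mu)}{\mu-z}\,d\mu,
\]
whereas the complementary term is analytic and uniformly bounded for $\Re z$ in any compact subset of $\interior(\Delta)$ and $\Im z>0$, because there $z$ stays a positive distance away from $\spec(H_0)\cap(\bbR\setminus\Delta)$. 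The uniform H\"older continuity of $T(z)$ up to the boundary, and the existence of the norm limits $T(\lambda+i0)$, then reduce to a Privalov/Plemelj--Sokhotski estimate for the Cauchy integral of the H\"older-continuous density $\mu\mapsto Z(\mu)^*Z(\mu)$, which is exactly what \eqref{b4} provides; the boundary values $T(\lambda+i0)$ are compact, being norm limits of the compact operators $T(z)$.

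Next I would analyse $M(z):=I+T(z)V_0$. For $\Im z>0$ the operator $H-z$ is boundedly invertible, and the factorized resolvent identity
\[
R(z)=R_0(z)-R_0(z)GV_0\bigl(I+T(z)V_0\bigr)^{-1}GR_0(z)
\]
shows that $M(z)$ is invertible in the open upper half-plane: a nontrivial kernel of $M(z)$ there would force a non-real eigenvalue of the self-adjoint operator $H$, which is impossible. Since $M(z)$ is a compact perturbation of the identity, analytic in $\Im z>0$, invertible there, and extends H\"older-continuously to $\interior(\Delta)$, the set $\Omega$ on which $M(\lambda+i0)$ is invertible is open (invertibility is an open condition and $M(\lambda+i0)$ is continuous), and on $\Omega$ the inverse $(I+T(\lambda+i0)V_0)^{-1}$ is H\"older continuous because operator inversion is locally Lipschitz. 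The point I expect to be the main obstacle is the claim that $\Ker\bigl(I+T(\lambda+i0)V_0\bigr)\ne\{0\}$ only on a set of measure zero, i.e. that $\Delta\setminus\Omega$ is null. This is a boundary uniqueness statement for the analytic Fredholm function $M(z)$; I would establish it by combining the analytic Fredholm alternative in $\Im z>0$ with a Luzin--Privalov type boundary uniqueness theorem, reduced to the scalar setting via matrix elements or a regularized determinant, exploiting that $M(z)^{-1}$ is analytic in the upper half-plane and the singular set of such a function on the boundary cannot carry positive Lebesgue measure.

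Finally, for part (ii) I would transfer the boundary regularity to $H$. Inserting the H\"older-continuous inverse from part (i) into the factorized resolvent identity yields a norm-H\"older-continuous expression for the sandwiched resolvent $GR(\lambda+i0)G$ on $\Omega$, that is, $G=\abs{V}^{1/2}$ is (locally) strongly $H$-smooth on $\Omega$ as well as $H_0$-smooth. Part (ii) is then a direct application of the Kato--Birman smooth method: local $H_0$- and $H$-smoothness of $G$ on $\Omega$, together with the factorization $V=GV_0G$, gives the existence and completeness of $W_\pm(H,H_0;\Omega)$ and the pure absolute continuity of the spectrum of $H$ in $\Omega$ (see, e.g., \cite[Section~4.4]{Yafaev}). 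Thus the genuinely new analysis lies entirely in the limiting absorption principle of part (i), with part (ii) following by citation of the smooth perturbation theory.
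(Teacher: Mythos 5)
You should first be aware that the paper itself does not prove Proposition~\ref{pr1}: it is presented as a package of well-known facts with a pointer to \cite[Section~4.4]{Yafaev}, plus the remark that the last claim (pure absolute continuity of $H$ on $\Omega$ without assuming $\Ker G=\{0\}$) is delegated to Lemma~A.1 of \cite{PushYaf}. Your outline is essentially the standard proof behind that citation: the splitting of $R_0(z)$ by $E_{H_0}(\Delta)$, the Privalov estimate for the operator-valued Cauchy integral with the H\"older density $Z(\mu)^*Z(\mu)$, invertibility of $I+T(z)V_0$ for $\Im z>0$ via self-adjointness of $H$, openness of $\Omega$ by norm continuity, and the Kato smooth method for part (ii). So at the level of strategy there is nothing to contrast with the paper; the question is only whether your sketch closes.

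Two steps need repair. First, the measure-zero claim cannot be run through a regularized determinant: $T(\lambda+i0)V_0$ is only known to be compact and H\"older continuous in the \emph{operator} norm, with no Schatten-class control, so no $\det_p$ is available, and ``matrix elements'' do not detect invertibility of $I+K$. The standard fix is a local finite-rank factorization: on a compact half-disk $U$ one has $\norm{(I-P_n)T(z)V_0}<1/2$ uniformly for a finite-rank projection $P_n$ (by norm-compactness of the image of $z\mapsto T(z)V_0$), whence $I+T(z)V_0$ is invertible iff a finite-dimensional determinant $d(z)$, analytic in $U\cap\bbC_+$ and continuous up to the boundary, is nonzero; the Luzin--Privalov boundary uniqueness theorem then applies to $d$. (The alternative route --- showing that $f+T(\lambda+i0)V_0f=0$ forces $Z(\lambda)V_0f=0$ via $\Im T(\lambda+i0)\geq 0$, and hence that $\lambda$ is an eigenvalue of $H$, so $\Delta\setminus\Omega$ is countable --- works only for $\alpha>1/2$, since one needs $\norm{Z(t)V_0f}/\abs{t-\lambda}$ to be square integrable.) Second, in part (ii) the $H$-smoothness of $G$ on $\Omega$ yields absolute continuity only of the closure of $\Ran(E_H(\Omega)G^*)$; to conclude that the spectrum of $H$ in $\Omega$ is \emph{purely} absolutely continuous one must handle $\Ker G\neq\{0\}$, which is precisely the point the paper flags and resolves by citing Lemma~A.1 of \cite{PushYaf}. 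Your sketch passes over both points; with them supplied, the argument is the standard one.
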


The last statement of  Proposition~\ref{pr1} is usually formulated 
under the additional assumption $\Ker G=\{0\}$. 
Actually, this assumption is not necessary; this is verified    
in Lemma~A.1 of  \cite{PushYaf}.

Since the scattering operator $\mathbf{S}$
commutes with $H_{0}$,
we have a representation
\[
( \calF \mathbf{S}  \calF^* f)(\lambda) =S(\lambda) f(\lambda), 
\quad \text{ a.e. }\lambda\in \Delta,
\]
where the operator  $S(\lambda):\calN\to\calN$  is  called the scattering matrix
for the pair of operators $H_0$, $H$. 
The scattering matrix  is a unitary operator in $\calN$. 
We need the stationary representation for the scattering matrix
(see \cite[Chapter~7]{Yafaev} for the details): 
\begin{equation}
S(\lambda)=I-2\pi i Z(\lambda)V_0(I+T(\lambda+i0)V_0)^{-1}Z(\lambda)^*,
\quad 
 \lambda\in\Omega.
\label{b5a}
\end{equation}
This representation, in particular, implies that $S(\lambda)$ is 
a H\"older continuous function of $\lambda\in\Omega$. 
Since the operator $V_0(I+T(\lambda+i0)V_0)^{-1}$  
is bounded and $Z(\lambda)$ is compact, it follows that the 
operator $S(\lambda)-I$ is  compact.  
Thus, the spectrum of $S(\lambda)$ consists of  eigenvalues 
accumulating possibly only to the point 1. 
All eigenvalues of $S(\lambda)$ distinct from $1$ have finite multiplicities.
We denote by $\{s_n(\lambda)\}_{n=1}^N$ the eigenvalues of 
$S(\lambda)$, enumerated with multiplicities taken into account.

\subsection{Hankel operators}\label{sec.b3}
Recall that the Hardy space  $H^2(\bbC_\pm)\subset L^2(\bbR)$ 
is defined as the class of all functions $f$ analytic in $\bbC_\pm=\{z\in\bbC: \pm \Im z>0\}$ 
and satisfying the estimate
$$
\sup_{y>0}\int_{\bbR} \abs{f(x\pm iy)}^2 dx<\infty.
$$
Let $P_\pm$ be the orthogonal projection
in $L^2(\bbR)$ onto $H^2(\bbC_\pm)$. 
The explicit formula for $P_\pm$ is 
\begin{equation}
(P_\pm f)(x)=\mp\frac1{2\pi i}\LIM_{\eps\to+0}
\int_\bbR \frac{f(y)}{x-y\pm i\eps}dy, 
\label{hp}
\end{equation}
where $\LIM$ denotes the limit in $L^2(\bbR)$.

For $\varphi\in C_0(\bbR)$ we denote by 
$\boldsymbol{\varphi}$ the operator
of multiplication by $\varphi(x)$ in $L^2(\bbR,dx)$
and by  $H_\varphi$ 
the Hankel operator in $L^2(\bbR)$ with the symbol 
$\varphi$: $H_\varphi=P_-\boldsymbol{\varphi} P_+$. 
It is well-known \cite{Peller}
that the assumption $\varphi\in C_0(\bbR)$ implies that 
$H_\varphi$ is compact. We denote by $\{\mu_m(\varphi)\}_{m=1}^\infty$ the 
sequence of singular values of $H_\varphi$ enumerated
in decreasing order with multiplicities taken into account. 

It is easy to check that for $\varphi(x)=1/(1+x^2)$ one has
\begin{equation}
H_\varphi f=-\frac14 v(x)\int_{\bbR}f(y)v(y)dy, 
\quad
v(x)=\frac1{\sqrt{\pi}}\frac1{x-i}. 
\label{b5b}
\end{equation}
Since
$\norm{v}_{L^2(\bbR)}=1$
we see that for this choice of $\varphi$ the singular values of $H_\varphi$ are 
$\mu_1(\varphi)=1/4$ and $\mu_m(\varphi)=0$ for all $m\geq2$.

\subsection{Main result}\label{sec.b4}

Let $\varphi\in C_0(\bbR)$; 
fix $\lambda\in\Omega$ 
(the set $\Omega$ is defined in Proposition~\ref{pr1})
and let $A(\delta)$ be as in \eqref{a1}.
Let us define the set 
\begin{equation}
\sigma_0(\varphi,\lambda)
=
\{\pm
\mu_m(\varphi)\abs{s_n(\lambda)-1}:
n=1,\dots,N,\, m\in\bbN\}\cup\{0\}.
\label{b6}
\end{equation}
As we will see, this set is the limiting spectrum of $A(\delta)$ as $\delta\to0$.
The corresponding eigenvalue counting function is defined as
\begin{equation}
N_0(s)
=\#\{n=1,\dots,N,\, m\in\bbN: \mu_m(\varphi)\abs{s_n(\lambda)-1}>s\}, 
\quad s>0.
\label{b7}
\end{equation}
Our main result is 
\begin{theorem}\label{thm.b1}
Let Assumption~\ref{as1} hold true;
fix $\varphi\in C_0(\bbR)$ 
and $\lambda\in\Omega$. Let $A(\delta)$ be as in \eqref{a1}.
Then for any $s>0$, $s\notin\sigma_0(\varphi,\lambda)$, one has
\begin{equation}
N_{\pm A(\delta)}(s,\infty)\to N_0(s),
\quad \text{ as }\delta\to+0.
\label{b8}
\end{equation}
\end{theorem}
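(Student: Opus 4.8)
The plan is to reduce the spectral analysis of $A(\delta)$, by means of the stationary scattering theory recalled in Section~\ref{sec.b2}, to an explicit model operator assembled from the Hankel operator $H_\varphi$ and the scattering matrix $S(\lambda)$, and then to read off the limiting eigenvalue asymptotics \eqref{b8} directly from this model. First I would represent the difference \eqref{a1} in a ``sandwiched'' form. Using the factorisation $V=GV_0G$, the second resolvent identity, and a Cauchy--Stieltjes (equivalently, double operator integral) representation of $\varphi_\delta$, one writes $A(\delta)$ with outer factors $G_\Delta$, $G_\Delta^*$ and an inner factor built from the divided difference of $\varphi_\delta$ and the operator $V_0\big(I+T(\cdot+i0)V_0\big)^{-1}$ that already figures in \eqref{b5a}. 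Passing to the spectral representation \eqref{b2}--\eqref{b3} converts the outer factors into the compact family $Z(\lambda)$, $Z(\lambda)^*$.

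The second step is localisation. Because $\varphi_\delta(\cdot-\lambda)$ is concentrated in a window of size $O(\delta)$ around $\lambda$, while by Proposition~\ref{pr1} and \eqref{b4} the maps $Z(\cdot)$ and $\big(I+T(\cdot+i0)V_0\big)^{-1}$ are H\"older continuous, I would freeze every energy-dependent factor at the value $\lambda$, incurring an error of order $\delta^\alpha$ in the operator norm. After rescaling $\mu=\lambda+\delta\eta$ the divided difference becomes $\delta^{-1}\big(\varphi(\eta)-\varphi(\eta')\big)/(\eta-\eta')$, and the Jacobian of the energy rescaling supplies the compensating factor of $\delta$, so that the rescaled operator has a nontrivial $O(1)$ limit on $L^2(\bbR_\eta)\otimes\calN$.

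The conceptual heart is the identification of this limit. The kernel $\big(\varphi(\eta)-\varphi(\eta')\big)/(\eta-\eta')$ on $L^2(\bbR)$ is the kernel of the commutator $[\boldsymbol{\varphi},\Pi]$, where $\Pi$ is the singular integral operator with kernel $\mathrm{p.v.}\,(\eta-\eta')^{-1}$; by \eqref{hp} one has $\Pi=-i\pi(P_+-P_-)$, and since $[\boldsymbol{\varphi},P_+]=H_\varphi-H_\varphi^*$ for real $\varphi$, this commutator equals $-2\pi i\,(H_\varphi-H_\varphi^*)$. This is a self-adjoint operator on $H^2(\bbC_+)\oplus H^2(\bbC_-)$, off-diagonal in this decomposition, with nonzero eigenvalues $\pm2\pi\mu_m(\varphi)$. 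On the other hand, by \eqref{b5a} the inner scattering factor, once sandwiched between $Z(\lambda)$ and $Z(\lambda)^*$, is precisely $(2\pi i)^{-1}\big(I-S(\lambda)\big)$, a normal operator on $\calN$ with singular values $(2\pi)^{-1}\abs{s_n(\lambda)-1}$. Keeping track of the self-adjointness of $A(\delta)$, one finds that $A(\delta)$ is, up to errors vanishing as $\delta\to0$, unitarily equivalent to a self-adjoint model operator $\mathbf M$ on $\big(H^2(\bbC_+)\oplus H^2(\bbC_-)\big)\otimes\calN$ whose off-diagonal block combines $H_\varphi$ with $I-S(\lambda)$. Diagonalising the normal operator $S(\lambda)$ in $\calN$ decouples $\mathbf M$ into an orthogonal sum, over the eigenvalues $s_n(\lambda)$, of copies of the Hankel block whose two off-diagonal entries are multiplied by $1-s_n(\lambda)$ and its conjugate; hence $\mathbf M^2$ carries $\mu_m(\varphi)^2\abs{s_n(\lambda)-1}^2$, the eigenvalues of $\mathbf M$ are exactly $\pm\mu_m(\varphi)\abs{s_n(\lambda)-1}$, and the counting functions of $\pm\mathbf M$ coincide with $N_0$ from \eqref{b7} (the factors $2\pi$ cancelling).

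To complete the proof of \eqref{b8} I would first carry out the above for a dense class of smooth, rapidly decaying symbols $\varphi$, for which the double operator integral manipulations are rigorous, and then extend to arbitrary $\varphi\in C_0(\bbR)$ by approximation, using the continuity of $\varphi\mapsto H_\varphi$ together with the stability of the limiting counting function at a point $s\notin\sigma_0(\varphi,\lambda)$. The passage from operator-norm convergence $A(\delta)\to\mathbf M$ (modulo unitary equivalence) of compact operators to the convergence $N_{\pm A(\delta)}(s,\infty)\to N_0(s)$ at continuity points of $N_0$ is standard. I expect the main obstacle to be precisely the rigorous justification of the model identification with controlled remainders: one must bound, uniformly as $\delta\to0$ and in the operator norm, all the error terms generated by freezing the H\"older-continuous factors and by the rescaling, and, more delicately, ensure that these remainders are negligible \emph{in the sense of the eigenvalue counting function}, so that no eigenvalues accumulate at the threshold $s$. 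This is exactly where the hypothesis $s\notin\sigma_0(\varphi,\lambda)$, i.e.\ the continuity of $N_0$ at $s$, together with inequalities of Weyl and Ky~Fan type for singular values, play their role.
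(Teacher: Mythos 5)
Your overall architecture coincides with the paper's: both reduce $A(\delta)$, via the resolvent identity, the spectral representation \eqref{b2}--\eqref{b3} and the stationary formula \eqref{b5a}, to the model operator $H_{\varphi_\delta}\otimes(S(\lambda)-I)+H_{\varphi_\delta}^*\otimes(S(\lambda)^*-I)$, whose spectrum is computed exactly as you describe (off-diagonal block structure after diagonalising $S(\lambda)$, eigenvalues $\pm\mu_m(\varphi)\abs{s_n(\lambda)-1}$), and both finish by a density argument in $\varphi$ and a min--max transfer of counting functions. The paper uses Stone's formula rather than a double operator integral, and keeps the model at scale $\delta$ (unitarily equivalent to scale $1$) rather than rescaling $\mu=\lambda+\delta\eta$, but these are cosmetic differences.

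The genuine gap is in your localisation step. You claim that freezing the H\"older-continuous factors $Z(\cdot)$ and $(I+T(\cdot+i0)V_0)^{-1}$ at the energy $\lambda$ costs an error of order $\delta^\alpha$ in the operator norm because $\varphi_\delta(\cdot-\lambda)$ lives in a window of size $O(\delta)$. This does not work as stated: the divided-difference kernel (equivalently, the Hardy projections $P_\pm$ in \eqref{hp}) is nonlocal, so the factors $Z(\cdot)$ standing \emph{outside} these kernels are sampled over the whole interval, not just the $O(\delta)$ window; in your rescaled variable $\eta\in(-a/\delta,a/\delta)$ the bound $\norm{Z(\lambda+\delta\eta)-Z(\lambda)}\leq C\abs{\delta\eta}^\alpha$ is not uniformly small. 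The paper's resolution is its Lemma~\ref{lma.c4}: the commutators $\Pi_\calN(\calZ\calP_{\calH,\pm}-\calP_{\calN,\pm}\calZ)\Pi_\calH^*$, whose kernels are the divided differences $(Z(x)-Z(y))/(x-y)$, are shown to be \emph{compact} (by approximating $Z(\cdot)$ by operator-valued polynomials with finite-rank coefficients), after which the strong convergence $\boldsymbol{\varphi_\delta}\to0$ kills them in norm --- with no rate whatsoever. Only after the $Z$'s have been commuted through $P_\pm$ to sit next to $\boldsymbol{\varphi_\delta}$ does H\"older continuity finish the job. You correctly identify this as the main obstacle, but the mechanism you propose (a H\"older rate) would not close it; compactness plus strong convergence is the actual engine. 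A secondary gloss: ``unitary equivalence modulo vanishing errors'' hides that the intertwiner is only a partial isometry between different Hilbert spaces, so besides $\norm{A(\delta)-QA_0(\delta)Q^*}\to0$ one must also verify $\norm{(Q^*Q-I)A_0(\delta)}\to0$ and $\norm{(QQ^*-I)A(\delta)}\to0$ before the counting functions can be compared; this part is indeed routine once stated.
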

It is easy to translate this into the more explicit language
of eigenvalues. 
We will say that a point $\nu\in\sigma_0(\varphi,\lambda)$, 
$\nu\not=0$, has multiplicity $k\geq1$ in $\sigma_0(\varphi,\lambda)$ if  $\nu$ can be represented
as $\pm \mu_m(\varphi)\abs{s_n(\lambda)-1}$ for $k$ different 
choices of $n$, $m$. For $\nu\notin \sigma_0(\varphi,\lambda)$
we set the multiplicity of $\nu$ to zero. 
The multiplicity of $\nu$ can be alternatively defined as
$k=N_0(\abs{\nu}+0)-N_0(\abs{\nu}-0)$. 
\begin{corollary}\label{cr.b2}
Let $\nu\in\bbR$, $\nu\not=0$ and suppose that the multiplicity of 
$\nu$ in $\sigma_0(\varphi,\lambda)$ is $k\geq0$. 
Then for any sufficiently small $\rho>0$ there exists $\delta=\delta(\rho)$ 
such that for all $\delta'\in(0,\delta]$ the operator $A(\delta')$ has exactly 
$k$ eigenvalues (counting multiplicities) in the interval $(\nu-\rho,\nu+\rho)$. 
\end{corollary}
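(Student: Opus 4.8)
The plan is to deduce the corollary from Theorem~\ref{thm.b1} by a purely combinatorial passage from counting functions to eigenvalues, treating the analytic content of the theorem as already in hand. First I would record two elementary structural facts about the limiting set. Since $\mu_m(\varphi)\to0$ as $m\to\infty$ while $\abs{s_n(\lambda)-1}\le 2$ for every $n$ (the $s_n(\lambda)$ lie on the unit circle), the products $\mu_m(\varphi)\abs{s_n(\lambda)-1}$ can accumulate only at $0$; hence $\sigma_0(\varphi,\lambda)\setminus\{0\}$ is a discrete subset of $\bbR$ in which each point has \emph{finite} multiplicity in the sense defined just before the corollary. Moreover $\sigma_0(\varphi,\lambda)$ is symmetric about the origin, with $\nu$ and $-\nu$ carrying equal multiplicity, and by Theorem~\ref{thm.b1} the counting functions of $A(\delta)$ and of $-A(\delta)$ obey the same asymptotics. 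Replacing $A(\delta)$ by $-A(\delta)$ and $\nu$ by $-\nu$ if necessary, I may therefore assume $\nu>0$ throughout.

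Fix such a $\nu>0$ of multiplicity $k\ge0$. Using discreteness, choose $\rho>0$ so small that $\nu$ is the only point of $\sigma_0(\varphi,\lambda)$ in the closed interval $[\nu-\rho,\nu+\rho]$, shrinking it further if needed so that also $\nu\pm\rho\notin\sigma_0(\varphi,\lambda)$ and $\nu-\rho>0$. For any exponent $\rho'$ with $\nu\pm\rho'\notin\sigma_0(\varphi,\lambda)$ and $\nu-\rho'>0$ I have the exact identity
\[
\rank E_{A(\delta)}\big((\nu-\rho',\nu+\rho']\big)
=
N_{A(\delta)}(\nu-\rho',\infty)-N_{A(\delta)}(\nu+\rho',\infty),
\]
relating the number of eigenvalues of $A(\delta)$ in a half-open interval to a difference of tail counting functions. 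By Theorem~\ref{thm.b1} each sequence on the right converges, as $\delta\to+0$, to the integer $N_0(\nu-\rho')$, respectively $N_0(\nu+\rho')$; being integer-valued and convergent, they are eventually constant. Hence there is $\delta_0>0$ such that for all $\delta\in(0,\delta_0]$ the left-hand side equals $N_0(\nu-\rho')-N_0(\nu+\rho')$, and since $\nu$ is the unique point of $\sigma_0(\varphi,\lambda)$ strictly between $\nu-\rho'$ and $\nu+\rho'$, definition~\eqref{b7} of $N_0$ gives $N_0(\nu-\rho')-N_0(\nu+\rho')=k$.

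It remains to replace the half-open interval by the open interval $(\nu-\rho,\nu+\rho)$, that is, to rule out an eigenvalue of $A(\delta)$ sitting exactly at an endpoint; this is the only point that needs a little care. I would handle it by a sandwich. Pick $0<\rho_-<\rho<\rho_+$ with $\nu\pm\rho_\pm\notin\sigma_0(\varphi,\lambda)$ and with $\nu$ still the only point of $\sigma_0(\varphi,\lambda)$ in $[\nu-\rho_+,\nu+\rho_+]$. Applying the previous paragraph to $\rho_-$ and to $\rho_+$ yields a single $\delta_0>0$ such that, for all $\delta\in(0,\delta_0]$, both half-open intervals $(\nu-\rho_-,\nu+\rho_-]$ and $(\nu-\rho_+,\nu+\rho_+]$ contain exactly $k$ eigenvalues of $A(\delta)$. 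Since
\[
(\nu-\rho_-,\nu+\rho_-]\subset(\nu-\rho,\nu+\rho)\subset(\nu-\rho_+,\nu+\rho_+],
\]
monotonicity of $\rank E_{A(\delta)}$ under inclusion of Borel sets forces the open interval to contain at least $k$ and at most $k$ eigenvalues, hence exactly $k$, for all $\delta\in(0,\delta_0]$.

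The argument is entirely elementary once Theorem~\ref{thm.b1} is granted: the tail counting functions are integer-valued, so their convergence is eventual equality, and the rest is set bookkeeping. I expect the only genuine subtleties to be the endpoint passage from half-open to open intervals just described, and the preliminary observation that the multiplicities in $\sigma_0(\varphi,\lambda)$ are finite and the positive points isolated — it is precisely this finiteness that makes the choice of $\rho$, $\rho_-$, $\rho_+$ possible.
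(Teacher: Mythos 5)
Your proof is correct, and it is precisely the routine translation the paper has in mind: the paper offers no separate proof of Corollary~\ref{cr.b2} (it is dismissed with ``it is easy to translate this into the more explicit language of eigenvalues''), and your passage from the tail counting functions of Theorem~\ref{thm.b1} to eigenvalue counts via eventual constancy of integer-valued limits, plus the sandwich $(\nu-\rho_-,\nu+\rho_-]\subset(\nu-\rho,\nu+\rho)\subset(\nu-\rho_+,\nu+\rho_+]$ to handle endpoints, is the standard and intended argument. One small point of justification: to get that $\sigma_0(\varphi,\lambda)\setminus\{0\}$ is discrete with finite multiplicities you need not only $\mu_m(\varphi)\to0$ and $\abs{s_n(\lambda)-1}\le 2$, but also that the eigenvalues $s_n(\lambda)$ accumulate only at $1$ (so that, for each fixed $m$, only finitely many $n$ give $\mu_m(\varphi)\abs{s_n(\lambda)-1}>s$); this follows from the compactness of $S(\lambda)-I$ established in Section~\ref{sec.b2} and should be cited, since the two facts you list do not by themselves exclude a nonzero accumulation point.
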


Applying Theorem~\ref{thm.b1} to  $\varphi(x)=1/(1+x^2)$ 
and recalling \eqref{b5b}, we obtain  a
particularly simple relation:
\begin{equation}
N_{\pm A(\delta)}(s,\infty)\to \#\{n=1,\dots,N:\tfrac14\abs{s_n(\lambda)-1}>s\}, 
\quad \delta\to+0,
\label{b9}
\end{equation}
for all $s>0$ such that $s\not=\frac14\abs{s_n(\lambda)-1}$, $n\in\bbN$. 

Theorem~\ref{thm.b1} is proven in Sections~\ref{sec.c} and \ref{sec.d}.
In Section~\ref{sec.c} we introduce a model operator $A_0(\delta)$ 
(see \eqref{c0}) such that $A_0(\delta)$ is unitarily equivalent to $A_0(1)$
for all $\delta>0$ and the spectrum of $A_0(1)$ is given by $\sigma_0(\varphi,\lambda)$.
After this, we prove, roughly speaking, that 
$\norm{A(\delta)-A_0(\delta)}\to0$ as $\delta\to+0$ (see
Lemma~\ref{lma.c2} for the precise statement); this yields \eqref{b8}.
The representation \eqref{b5a} plays a crucial role in the proof.

\subsection{Applications and extensions}\label{sec.b5}

Let $H_0=-\Delta$ in $L^2(\bbR^d)$ with $d\geq1$.
Application of the Fourier transform shows that 
$H_0$ has a purely absolutely continuous spectrum $[0,\infty)$ 
with multiplicity $N=2$ if $d=1$ and $N=\infty$ if $d\geq2$.

Let $H=H_0+V$, where $V$ 
is the operator of multiplication by a function $V:\bbR^d\to\bbR$
which is assumed to satisfy
\begin{equation}
\abs{V(x)}\leq C(1+\abs{x})^{-\rho},
\quad 
\rho>1.
\label{a4}
\end{equation}
Then   Assumption~\ref{as1} is fulfilled on every 
compact subinterval $\Delta$ of $ (0,\infty)$.
 Moreover,  by a well-known argument involving 
Agmon's ``bootstrap" \cite{Agmon}
and Kato's theorem \cite{Kato} on the absence of positive eigenvalues
of $H$, the operator $I+T(\lambda+i0)V_0$ is invertible
for all $\lambda>0$  and hence $\Omega={\rm int}(\Delta)$.  
Thus, Proposition~\ref{pr1} implies that the wave operators 
$W_{\pm} (H,H_{0})$   exist and are complete 
(this result was first obtained in  \cite{Kato3,Kuroda}).
The scattering matrix $S(\lambda)$  is a $2\times 2$ unitary 
matrix if $d=1$ and a  unitary operator in $L^2(\mathbb S^{d-1})$
if $d\geq2$. 
Theorem~\ref{thm.b1} applies to this situation. 

Similar applications are possible in situations where the diagonalization 
of $H_0$ is known explicitly. For example, 
the perturbed
Schr\"odinger operator with a constant magnetic field in dimension 
three can be considered.

Next, the construction of this paper can easily be extended to 
the case  of operators $H_0$, $H$ which are not lower semi-bounded. 
Here one should follow  \cite{PushYaf} to define the 
sum $H_0+V$  in an appropriate way and to prove that $A(\delta)$ 
is compact for $\varphi\in C_0(\bbR)$. 
The rest of the construction remains the same.

\section{Proof of Theorem~\ref{thm.b1}}\label{sec.c}

\subsection{The model operator}\label{sec.c1}
Below we assume without loss of generality that $\lambda=0$ and let $a>0$ be such 
that the interval $[-a,a]$ belongs to the set $\Omega$. 
For $\delta>0$, 
let us define the model operator $A_0(\delta)$ in 
$L^2(\bbR,\calN)=L^2(\bbR)\otimes\calN$ by 
\begin{equation}
A_0(\delta)=H_{\varphi_\delta}\otimes (S(0)-I)+H_{\varphi_\delta}^*\otimes (S(0)^*-I).
\label{c0}
\end{equation}
By definition,  the operator $A_0(\delta)$ is self-adjoint. 
Since both $H_{\varphi_\delta}$ and $S(0)-I$ are compact, the operator $A_0(\delta)$ is 
also compact. 

For $\delta>0$, let $U(\delta)$ in $L^2(\bbR,\calN)$ be the unitary scaling operator:
$$
(U(\delta) f)(x)=\delta^{-1/2}f(x/\delta).
$$
It is straightforward to see that 
\begin{equation}
A_0(\delta)=U(\delta)A_0(1)U(\delta)^*;
\label{c0a}
\end{equation}
in particular, the spectrum and the eigenvalue counting function 
of $A_0(\delta)$ are independent of $\delta$. 

\begin{lemma}\label{lma.c1}
For all $\delta>0$, 
the spectrum of $A_0(\delta)$ coincides with the set $\sigma_0(\varphi,0)$ 
defined in \eqref{b6}. 
For any $s>0$, $s\notin\sigma_0(\varphi,0)$, one has
$$
N_{\pm A_0(\delta)}(s,\infty)
= 
N_0(s),
$$
with $N_0(s)$ defined in \eqref{b7}.
\end{lemma}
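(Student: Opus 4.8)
The plan is to reduce to $\delta=1$ and then diagonalize the factor in $\calN$. Since \eqref{c0a} exhibits $A_0(\delta)$ as unitarily equivalent to $A_0(1)$, the spectrum and both counting functions $N_{\pm A_0(\delta)}(s,\infty)$ are independent of $\delta$, so it suffices to analyse
\[
A_0(1)=H_\varphi\otimes(S(0)-I)+H_\varphi^*\otimes(S(0)^*-I).
\]
The operator $S(0)-I$ is compact and $S(0)$ is unitary, so there is an orthonormal basis $\{e_n\}$ of $\calN$ with $S(0)e_n=s_n(0)e_n$, where $\abs{s_n(0)}=1$ and $s_n(0)\to1$. Writing $c_n=s_n(0)-1$, the subspaces $L^2(\bbR)\otimes e_n$ reduce $A_0(1)$, and on each of them $A_0(1)$ acts as the self-adjoint operator $B_n=c_nH_\varphi+\overline{c_n}\,H_\varphi^*$ on $L^2(\bbR)$. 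Thus $A_0(1)=\bigoplus_n B_n$, and it remains to compute $\spec(B_n)$.

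Next I would use the Hankel structure $H_\varphi=P_-\boldsymbol{\varphi}P_+$. Relative to the orthogonal decomposition $L^2(\bbR)=H^2(\bbC_+)\oplus H^2(\bbC_-)$, the operator $H_\varphi$ sends the first summand into the second and annihilates the second, so $B_n$ is off-diagonal:
\[
B_n=\begin{pmatrix}0 & \overline{c_n}\,H_\varphi^*\\[2pt] c_nH_\varphi & 0\end{pmatrix}.
\]
For such an off-diagonal self-adjoint operator the nonzero spectrum consists of $\pm$ the singular values of the off-diagonal block; since $c_nH_\varphi$ has singular values $\abs{c_n}\mu_m(\varphi)$, this gives $\spec(B_n)\setminus\{0\}=\{\pm\abs{c_n}\mu_m(\varphi):m\in\bbN,\ \mu_m(\varphi)>0\}$, each value carrying the multiplicity of the corresponding $\mu_m(\varphi)$. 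I would make this precise either by the singular value decomposition of the block, or by noting that $H_\varphi^2=(H_\varphi^*)^2=0$ (because $P_+P_-=0$), so that $B_n^2=\abs{c_n}^2(H_\varphi H_\varphi^*+H_\varphi^*H_\varphi)$ has eigenvalues $\abs{c_n}^2\mu_m(\varphi)^2$, while $J=P_+-P_-$ satisfies $JB_nJ=-B_n$, which forces $\spec(B_n)$ to be symmetric about the origin.

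Taking the union over $n$ then yields $\spec(A_0(1))=\{\pm\abs{s_n(0)-1}\mu_m(\varphi)\}\cup\{0\}=\sigma_0(\varphi,0)$, which is the first assertion of the lemma. For the counting function, fix $s>0$ with $s\notin\sigma_0(\varphi,0)$. Because $\mu_m(\varphi)\to0$ and $s_n(0)\to1$, only finitely many pairs $(n,m)$ satisfy $\mu_m(\varphi)\abs{s_n(0)-1}>s$, so the relevant eigenvalues are isolated and of finite total multiplicity. The positive eigenvalues of $A_0(1)$ exceeding $s$ are exactly the numbers $\abs{s_n(0)-1}\mu_m(\varphi)>s$, counted with multiplicities, so $N_{A_0(1)}(s,\infty)=N_0(s)$; the symmetry $\spec(B_n)=-\spec(B_n)$ gives the identical count for $-A_0(1)$.

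The step I expect to be the main obstacle is the bookkeeping of multiplicities rather than any analytic difficulty: one has to verify that the multiplicity with which a value $\nu=\abs{s_n(0)-1}\mu_m(\varphi)$ occurs in $\spec(A_0(1))$ equals the number of pairs $(n,m)$ producing it (accounting for the multiplicities of both $s_n(0)$ and $\mu_m(\varphi)$), and that the zero eigenvalue—potentially of infinite multiplicity, arising from $s_n(0)=1$ or $\mu_m(\varphi)=0$—does not interfere with the count for $s>0$. Everything else is a routine consequence of the off-diagonal block structure of $B_n$.
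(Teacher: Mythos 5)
Your proposal is correct and follows essentially the same route as the paper: reduce to $\delta=1$ by the scaling unitary, diagonalize $S(0)$ to split $A_0(1)$ into an orthogonal sum of operators $c_nH_\varphi+\overline{c_n}H_\varphi^*$, and read off the spectrum from the off-diagonal block structure relative to $H^2(\bbC_+)\oplus H^2(\bbC_-)$, whose nonzero spectrum is $\pm$ the singular values of the block. The only cosmetic difference is that the paper factors $c_n$ into modulus and phase before exhibiting the $2\times2$ block form, whereas you keep $c_n$ and offer an extra justification via $B_n^2$ and the symmetry $JB_nJ=-B_n$.
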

\begin{proof}
By \eqref{c0a}, it suffices to consider the case $\delta=1$. 
Let the eigenvalues of $S(0)$ be $s_n(0)=e^{i\theta_n}$, $\theta_n\in[0,2\pi)$, and let 
$\alpha_n=(\theta_n+\pi)/2$. 
Using the spectral decomposition of $S(0)$, one represents
the operator $A_0(\delta)$ as the orthogonal sum of the operators
\begin{equation}
(s_n(0)-1)H_\varphi+(\overline{s_n(0)}-1)H_\varphi^*
=
\abs{s_n(0)-1}(e^{i\alpha_n} P_-\boldsymbol{\varphi}P_+
+
e^{-i\alpha_n}P_+\boldsymbol{\varphi}P_-)
\label{c0c}
\end{equation}
in $L^2(\bbR)$. With respect to the orthogonal sum decomposition 
$L^2(\bbR)=H^2(\bbC_-)\oplus H^2(\bbC_+)$ we have 
\begin{equation}
e^{i\alpha_n} P_-\boldsymbol{\varphi} P_+
+
e^{-i\alpha_n}P_+\boldsymbol{\varphi} P_-
=
\begin{pmatrix}
0 & e^{i\alpha_n}P_-\boldsymbol{\varphi}P_+
\\
e^{-i\alpha_n}P_+\boldsymbol{\varphi} P_- & 0
\end{pmatrix}.
\label{c0b}
\end{equation}
A simple argument shows that if $M$ is a compact operator
with the singular values $\{\mu_m\}_{m=1}^\infty$, 
then the spectrum of 
$\begin{pmatrix}0&M\\M^*&0\end{pmatrix}$
consists of the eigenvalues $\{\pm\mu_m\}_{m=1}^\infty$. 
Thus, we get that the spectrum of the  operator in the r.h.s. of \eqref{c0b} 
consists of the eigenvalues $\{\pm\mu_m(\varphi)\}_{m=1}^\infty$.
Combining this with \eqref{c0c}, we obtain 
the required statement. 
\end{proof}

\subsection{The strategy of proof}\label{sec.c2}
Let $\Pi_\calN$, $\Pi_\calH$ 
be the restriction operators: 
$$
\Pi_\calN:L^2(\bbR,\calN)\to L^2((-a,a),\calN)
\quad \text{ and }\quad  
\Pi_\calH: L^2(\bbR,\calH)\to L^2((-a,a),\calH).
$$
Recall that $\calF$ is defined in Section~\ref{sec.b1} (see \eqref{b2}); 
we set
$$
\calF_a=\calF E_{H_0}(-a,a):  \calH\to L^2((-a,a),\calN).
$$
Consider the partial isometry 
$$
Q: L^2(\bbR,\calN)\to\calH, 
\quad 
Q=\calF_a^* \Pi_\calN. 
$$
It is clear that $\norm{Q}=1$. 
In Section~\ref{sec.d} we prove 
\begin{lemma}\label{lma.c2}
As $\delta\to+0$, one has 
\begin{align}
\norm{A(\delta)-Q A_0(\delta) Q^*}&\to0,
\label{c1}
\\
\norm{(Q^*Q-I)A_0(\delta)}&\to0,
\label{c3}
\\
\norm{(QQ^*-I)A(\delta)}&\to0.
\label{c2}
\end{align}
\end{lemma}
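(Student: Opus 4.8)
The plan is to dispose of the two localization statements \eqref{c3} and \eqref{c2} by soft arguments and then to concentrate on the substantive approximation \eqref{c1}, for which the stationary representation \eqref{b5a} is the essential ingredient. First I record the two projections involved: since $\calF_a\calF_a^*=I$ on $L^2((-a,a),\calN)$ and $\Pi_\calN\Pi_\calN^*=I$, one has $Q^*Q=\Pi_\calN^*\Pi_\calN$, i.e.\ multiplication by the indicator of $(-a,a)$ on $L^2(\bbR,\calN)$, and $QQ^*=\calF_a^*\calF_a=E_{H_0}(-a,a)$. For \eqref{c3} I would use the scaling \eqref{c0a}: writing $\chi$ for the indicator of $\{\abs{x}\geq a\}$ and noting $\chi\,U(\delta)=U(\delta)\chi'$ with $\chi'$ the indicator of $\{\abs{x}\geq a/\delta\}$, unitarity of $U(\delta)$ gives $\norm{(Q^*Q-I)A_0(\delta)}=\norm{\chi'A_0(1)}$; since $\chi'\to0$ strongly as $\delta\to+0$ and $A_0(1)$ is compact, this norm tends to $0$.

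For \eqref{c2} it suffices, by self-adjointness, to bound $E_{H_0}(\bbR\setminus(-a,a))A(\delta)$. The $H_0$-part $E_{H_0}(\bbR\setminus(-a,a))\varphi_\delta(H_0)$ has norm at most $\sup_{\abs{s}\geq a/\delta}\abs{\varphi(s)}\to0$ by the spectral theorem. For the $H$-part I would fix a continuous $\zeta$ with $\zeta\equiv1$ on some $[-a',a']$ and $\supp\zeta\subset(-a,a)$, and split $\varphi_\delta=\zeta\varphi_\delta+(1-\zeta)\varphi_\delta$; the contribution of $(1-\zeta)\varphi_\delta$ is $O(\sup_{\abs{s}\geq a'/\delta}\abs{\varphi(s)})$ and hence negligible, while for the contribution of $\zeta\varphi_\delta$ I would write $\zeta(H)=\zeta(H_0)+K$ with $K$ compact and use $E_{H_0}(\bbR\setminus(-a,a))\zeta(H_0)=0$ to reduce it to $E_{H_0}(\bbR\setminus(-a,a))K\varphi_\delta(H)$. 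Finally $\varphi_\delta(H)\to0$ strongly (there is no eigenvalue of $H$ at $0$, since $0\in\Omega$ and the spectrum of $H$ in $\Omega$ is purely absolutely continuous by Proposition~\ref{pr1}), so $\varphi_\delta(H)K^*\to0$ in norm by compactness of $K^*$, and \eqref{c2} follows after passing to adjoints.

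The heart is \eqref{c1}. Because every quantity here is sup-norm-continuous in $\varphi$ (note $\norm{\varphi_\delta(H)-\psi_\delta(H)}\leq\norm{\varphi-\psi}_\infty$ and $\norm{H_{\varphi-\psi}}\leq\norm{\varphi-\psi}_\infty$), I would first reduce by density in $C_0(\bbR)$ to a real rational symbol $\varphi=\sum_k c_k(\,\cdot-z_k)^{-1}$ with $\Im z_k\neq0$. Then $\varphi_\delta(H)=\sum_k c_k\delta R(\delta z_k)$, so $A(\delta)=\sum_k c_k\delta\,(R(\delta z_k)-R_0(\delta z_k))$, and the resolvent identity in the factorized form $R(z)-R_0(z)=-R_0(z)G^*V_0(I+T(z)V_0)^{-1}GR_0(z)$ applies. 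Passing to the diagonalization $\calF$ and using \eqref{b3}, the compression $\calF_a A(\delta)\calF_a^*$ acquires an explicit $\calN$-valued kernel built from the factors $(\lambda-\delta z_k)^{-1}Z(\lambda)$, the middle factor $V_0(I+T(\delta z_k)V_0)^{-1}$, and $Z(\lambda')^*(\lambda'-\delta z_k)^{-1}$. Rescaling $\lambda=\delta x$ (i.e.\ conjugating by $U(\delta)$) turns the window into $(-a/\delta,a/\delta)$ and replaces the resolvent factors by the $\delta$-independent rational kernels $(x-z_k)^{-1}(x'-z_k)^{-1}$; the Hölder bounds \eqref{b4} and Proposition~\ref{pr1} then freeze the remaining factors at $0$, with $Z(\delta x)\to Z(0)$ and $T(\delta z_k)\to T(\pm i0)$ according to $\sign\Im z_k$. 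By \eqref{b5a} the two boundary values produce exactly the operators $S(0)-I$ and $S(0)^*-I$ on $\calN$, while the rational kernels assemble into $H_\varphi$ and $H_\varphi^*$ on $L^2(\bbR)$; this identifies the limit of $\calF_a A(\delta)\calF_a^*$ with the compression of $A_0(\delta)$, which together with \eqref{c2} yields \eqref{c1}.

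The step I expect to be the main obstacle is upgrading this kernel-level convergence to convergence in operator norm. Under the rescaling the Hölder modulus of $Z$ is evaluated at $\delta x$, so the error amplitude grows like $\abs{\delta x}^\alpha$ in the scaled energy, and a crude Hilbert–Schmidt bound against the rational factors $(x-z_k)^{-1}$ converges only for $\alpha<\tfrac12$. To cover the full range $\alpha\in(0,1]$ I would avoid entrywise estimates and instead treat $\lambda\mapsto(\lambda-\delta z_k)^{-1}Z(\lambda)$ as a single bounded operator factor whose norm convergence can be established directly from \eqref{b4}, using the decay of the rational factors only to control the large-$x$ tail. Once \eqref{c1} is proved for rational symbols, the sup-norm density reduction extends it, together with \eqref{c2} and \eqref{c3}, to all $\varphi\in C_0(\bbR)$.
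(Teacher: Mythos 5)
Your proposal is correct, and while \eqref{c3} and \eqref{c2} are handled essentially as in the paper (the paper uses a single cutoff $\zeta$ equal to $1$ outside $(-a,a)$ and the decomposition $\zeta(H_0)A(\delta)=\zeta(H)\varphi_\delta(H)-\zeta(H_0)\varphi_\delta(H_0)+(\zeta(H_0)-\zeta(H))\varphi_\delta(H)$, which is a minor rearrangement of your splitting), your route to \eqref{c1} is genuinely different. The paper works with an arbitrary compactly supported $\varphi$ and uses Stone's formula plus the resolvent identity to derive exact formulas (Lemma~\ref{lma.d1}) for $QA_0(\delta)Q^*$ and $E_{H_0}(-a,a)A(\delta)E_{H_0}(-a,a)$ in terms of the Hardy projections $\calP_{\calN,\pm}$, $\calP_{\calH,\pm}$; the key analytic step is then Lemma~\ref{lma.c4}, the compactness of the commutators $L^\pm$ in \eqref{d7}, proved by polynomial approximation of $Z(\cdot)$, which allows $\calZ$ to be moved past the Hardy projections and everything to be frozen at $x=0$. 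You instead push the sup-norm density reduction (which the paper also uses, but only to pass from compactly supported to general $\varphi\in C_0$) all the way down to rational symbols, where $\varphi_\delta(H)$ is a finite sum of resolvents and the compressed kernel is completely explicit; the Hardy/Hankel structure then appears only through the Cauchy kernels $(x-z_k)^{-1}(y-z_k)^{-1}$, and no commutator lemma is needed. Both arguments hinge on the same two ingredients, the factorized resolvent identity and the stationary formula \eqref{b5a}. One small correction to your final paragraph: the scalar weight $\int_{\abs{x}\le a/\delta}\abs{\delta x}^{2\alpha}\abs{x-z_k}^{-2}dx$ in fact tends to $0$ for every $\alpha\in(0,1]$ (it is $O(\delta^{2\alpha})$ for $\alpha<\tfrac12$, $O(\delta\log\tfrac1\delta)$ for $\alpha=\tfrac12$, and $O(\delta)$ for $\alpha>\tfrac12$), so there is no restriction to $\alpha<\tfrac12$; the genuine reason to avoid entrywise Hilbert--Schmidt estimates is that $\calN$ may be infinite-dimensional, so the middle factor $Z(0)Y(\pm i0)Z(0)^*$ need not be Hilbert--Schmidt, and your proposed fix --- factoring the kernel as a product of three operators and estimating the outer factors $h\mapsto(x-z_k)^{-1}Z(\delta x)h$ in operator norm by exactly that weighted $L^2$ integral --- is the right one and closes the argument.
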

Given Lemma~\ref{lma.c2}, it is not difficult to complete the
proof of Theorem~\ref{thm.b1}:

\begin{proof}[Proof of Theorem~\ref{thm.b1}]
1. 
First note that from \eqref{c1}--\eqref{c2} it follows that
\begin{equation}
\norm{A_0(\delta)-Q^*A(\delta)Q}\to0, 
\quad \delta\to+0.
\label{c4}
\end{equation}
Indeed, 
$$
A_0(\delta)
=
(I-Q^*Q)A_0(\delta)+Q^*QA_0(\delta)(I-Q^*Q)
+Q^*QA_0(\delta)Q^*Q, 
$$
and therefore, using the fact that $\norm{Q}=1$, 
\begin{multline*}
\norm{A_0(\delta)-Q^*A(\delta)Q}
\\
\leq
\norm{(I-Q^*Q)A_0(\delta)}
+
\norm{Q^*QA_0(\delta)(I-Q^*Q)}
+
\norm{Q^*(Q A_0(\delta) Q^*-A(\delta))Q}
\\
\leq
2\norm{(I-Q^*Q)A_0(\delta)}
+
\norm{Q A_0(\delta) Q^*-A(\delta)}\to0
\end{multline*}
as $\delta\to0$. 

2. 
Fix $s>0$, $s\notin\sigma_0(\varphi,0)$ and let $\eps\in(0,s)$, $\eps<1$.   
Using \eqref{c1}--\eqref{c4}, let us choose $\delta_0>0$ 
such that for all $\delta\in(0,\delta_0)$ we have
\begin{align}
\norm{(Q^*Q-I)A_0(\delta)}&<s\eps, 
\quad&
\norm{(QQ^*-I)A(\delta)}&<s\eps, 
\label{c5}
\\
\norm{A_0(\delta)-Q^*A(\delta)Q}&<\eps,
\quad&
\norm{A(\delta)-QA_0(\delta) Q^*}&<\eps.
\label{c6}
\end{align}
Below we prove that from \eqref{c5}, \eqref{c6} it follows that
\begin{align}
N_{A_0(\delta)}(\tfrac{s-\eps}{1+\eps},\infty)
&\geq
N_{A(\delta)}(s,\infty),
\label{c7}
\\
N_{A(\delta)}(\tfrac{s-\eps}{1+\eps},\infty)
&\geq
N_{A_0(\delta)}(s,\infty)
\label{c8}
\end{align}
for all $\delta<\delta_0$. 
Recall that by Lemma~\ref{lma.c1} the spectrum of $A_0(\delta)$ 
is independent of $\delta$ and coincides with $\sigma_0(\varphi,0)$. 
If $\eps$ is sufficiently small, the interval 
$[\tfrac{s-\eps}{1+\eps},s]$ contains no eigenvalues of $A_0(\delta)$ 
and so the l.h.s. of \eqref{c7} equals $N_{A_0(\delta)}(s,\infty)$. 
Thus, \eqref{c7} is equivalent to 
\begin{equation}
N_{A_0(\delta)}(s,\infty)\geq N_{A(\delta)}(s,\infty).
\label{c8a}
\end{equation}
On the other hand, 
denote $s'=s(1+\eps)+\eps$; then \eqref{c5}, \eqref{c6} hold true
with $s'$ in place of $s$ and therefore by \eqref{c8} we have
$$
N_{A(\delta)}(\tfrac{s'-\eps}{1+\eps},\infty)
\geq
N_{A_0(\delta)}(s',\infty).
$$
This can be rewritten as 
$$
N_{A(\delta)}(s,\infty)
\geq
N_{A_0(\delta)}(s(1+\eps)+\eps,\infty).
$$
Again, if $\eps$ is chosen sufficiently small then
the r.h.s. in the last relation 
equals $N_{A_0(\delta)}(s,\infty)$. Thus, we get 
$N_{A(\delta)}(s,\infty)\geq N_{A_0(\delta)}(s,\infty)$; 
combining this with \eqref{c8a} yields
$$
N_{A(\delta)}(s,\infty)=N_{A_0(\delta)}(s,\infty)
$$
for all sufficiently small $\delta$.
In the same way, one proves that 
$$
N_{-A(\delta)}(s,\infty)=N_{-A_0(\delta)}(s,\infty)
$$
for all $s\notin\sigma_0(\varphi,0)$ and all sufficiently small $\delta$. 
Thus, we arrive at \eqref{b8}.

3. It remains to prove \eqref{c7}, \eqref{c8}. 
We prove \eqref{c7}; the relation \eqref{c8} is proven in the same way. 
Let $f\in \Ran E_{A(\delta)}(s,\infty)$, 
$\norm{f}=1$; then $f=A(\delta)g$, $\norm{g}\leq 1/s$. 
Using \eqref{c5}, we get 
$$
\norm{(QQ^*-I)f}
=
\norm{(QQ^*-I)A(\delta)g}
\leq
s\eps/s
=
\eps.
$$
It follows that 
$$
\abs{((QQ^*-I)f,f)}<\eps, 
$$
or equivalently,
$$
\Abs{\norm{Q^*f}^2-1}<\eps.
$$
Since $\eps<1$, we obtain $\norm{Q^*f}\not=0$, and
also 
\begin{equation}
\norm{Q^*f}^2<1+\eps.
\label{c9}
\end{equation}
Further, by the definition of $f$ we have 
$(A(\delta)f,f)\geq s$
and so,
using \eqref{c6}, we get
$$
(A_0(\delta)Q^*f,Q^*f)
=
(A(\delta)f,f)
+
((QA_0(\delta)Q^*-A(\delta))f,f)
\geq 
s-\eps.
$$
Combining this with \eqref{c9}, we obtain
$$
\frac{(A_0(\delta)Q^*f,Q^*f)}
{\norm{Q^*f}^2}
>
\frac{s-\eps}{1+\eps}.
$$
Since we have already seen that $Q^*f\not=0$,
we get that $Q^*$ maps 
$\Ran E_{A(\delta)}(s,\infty)$ onto a subspace $L$, $\dim L=N_{A(\delta)}(s,\infty)$ 
and for all $h\in L$ we have
$$
(A_0(\delta)h,h)>\frac{s-\eps}{1+\eps}\norm{h}^2.
$$
By the min-max principle, we get \eqref{c7}. 
\end{proof}

\section{Proof of Lemma~\ref{lma.c2}}\label{sec.d}

\subsection{Preliminaries}
Here we prove the relations \eqref{c1}, \eqref{c3} and \eqref{c2}. 
The proof of  \eqref{c3} and \eqref{c2} is very straightforward, 
while the proof of \eqref{c1} requires 
 more detailed analysis. We will repeatedly use the following well-known fact. 
Let $M_n$ be a sequence of bounded operators such that $M_n\to0$ 
strongly as $n\to\infty$. Then for any compact operator $K$, one has
$\norm{M_nK}\to0$ as $n\to\infty$. 
In particular, if we also have $M_n^*\to0$ strongly, then 
$\norm{KM_n}=\norm{M_n^*K^*}\to0$ as $n\to\infty$. 

\subsection{The proof of \eqref{c3} and \eqref{c2} }\label{sec.c3}

Let us prove \eqref{c3}. 
We have 
$$
Q^*Q
=
(\calF_a^*{\Pi_\calN})^*\calF_a^*{\Pi_\calN}
=
{\Pi_\calN}^*\calF_a\calF_a^*{\Pi_\calN}
=
{\Pi_\calN}^*{\Pi_\calN}
=
\boldsymbol{\chi_{(-a,a)}},
$$
where $\boldsymbol{\chi_{(-a,a)}}$ is the 
operator of multiplication by the characteristic function 
of the interval $(-a,a)$ in $L^2(\bbR,\calN)$. 
It follows that 
$$
\norm{(Q^*Q-I)A_0(\delta)}
=
\norm{(\boldsymbol{\chi_{(-a,a)}}-I)U(\delta)A_0(1)U(\delta)^*}.
$$
It is straightforward to see that the operator 
$(\boldsymbol{\chi_{(-a,a)}}-I)U(\delta)$ converges to zero strongly
as $\delta\to+0$. Since $A_0(1)$ is a compact operator, we obtain \eqref{c3}.

Let us prove \eqref{c2}. By the definition of $Q$, we have
$$
QQ^*
=
\calF_a^*{\Pi_\calN} {\Pi_\calN}^*\calF_a
=
\calF_a^*\calF_a
=
E_{H_0}(-a,a).
$$
Thus, we need to prove that 
\begin{equation}
\norm{(E_{H_0}(-a,a)-I)A(\delta)}\to0, 
\quad
\delta\to0.
\label{c9a}
\end{equation}
Let $\zeta\in C(\bbR)$ be such that $\zeta(x)=1$ for $\abs{x}\geq a$ 
and $\zeta(x)=0$ for $\abs{x}\leq a/2$. 
Clearly, it suffices to prove that 
\begin{equation}
\norm{\zeta(H_0)A(\delta)}\to0, 
\quad 
\delta\to0.
\label{c10}
\end{equation}
We have
\begin{equation}
\zeta(H_0)A(\delta)=
\zeta(H)\varphi_\delta(H)-\zeta(H_0)\varphi_\delta(H_0)
+
(\zeta(H_0)-\zeta(H))\varphi_\delta(H).
\label{c11}
\end{equation}
Consider separately the three terms in the r.h.s. of \eqref{c11}. 
Since $\varphi(x)\to0$ as $\abs{x}\to0$, we have
$\norm{\zeta(H)\varphi_\delta(H)}\to0$ and 
$\norm{\zeta(H_0)\varphi_\delta(H_0)}\to0$ as $\delta\to+0$. 
Next, denoting $\wt \zeta(x)=1-\zeta(x)$, we have 
$\wt \zeta\in C_0(\bbR)$. It follows that the operator
$$
\zeta(H_0)-\zeta(H)=\wt \zeta(H)-\wt \zeta(H_0)
$$
is compact. By our assumptions we have $0\in\Omega$  
($\Omega$ is defined in Proposition~\ref{pr1}), 
and therefore $0$ is not an eigenvalue of $H$. It follows that  
$\varphi_\delta(H)$ converges to zero strongly as $\delta\to0$. 
Thus, we get 
$$
\norm{\varphi_\delta(H)(\zeta(H_0)-\zeta(H))}\to0
$$
as $\delta\to0$, and therefore the last term 
in the r.h.s. of \eqref{c11} converges to zero 
in the operator norm.
Thus, \eqref{c2} is proven.

In the rest of this section, we prove \eqref{c1}. 

\subsection{Notation}\label{sec.c4}
Denote $Y(z)=V_0(I+T(z)V_0)^{-1}$, $\Im z>0$; 
by Proposition~\ref{pr1}, the limit $Y(x+i0)$ exists for 
all $x\in[-a,a]$ and is a H\"older continuous function of $x$
in the operator norm.
Recall that the operators $Z(x)$, $x\in[-a,a]$, are defined
in Section~\ref{sec.b1} (see \eqref{b2} -- \eqref{b4}).  
Let us define the operators
$$
\calZ,\calZ_0: L^2(\bbR,\calH)\to L^2(\bbR,\calN)
\text{ and }
\calY,\calY_0: L^2(\bbR,\calH)\to L^2(\bbR,\calH)
$$ 
by 
$$
(\calZ_0 f)(x)=Z(0)f(x), 
\quad
(\calY_0 f)(x)=Y(+i0)f(x),
\quad
x\in\bbR,
$$
and 
$$
(\calZ f)(x)=
\begin{cases}
Z(x)f(x), & \abs{x}\leq a, 
\\
0, & \abs{x}>a,
\end{cases}
\quad
(\calY f)(x)=
\begin{cases}
Y(x+i0)f(x), & \abs{x}\leq a, 
\\
0, & \abs{x}>a.
\end{cases}
$$
We will also need the operator valued versions of the Hardy projections $P_\pm$
(see \eqref{hp}). 
Denote by $\calP_{\calH,\pm}$ the operators in $L^2(\bbR,\calH)=L^2(\bbR)\otimes \calH$
defined by $\calP_{\calH,\pm}=P_\pm\otimes I_\calH$. 
Similarly, $\calP_{\calN,\pm}$ 
are the operators $P_\pm\otimes I_\calN$ 
in $L^2(\bbR,\calN)=L^2(\bbR)\otimes \calN$.

\subsection{Two formulas}\label{sec.c5}
Here we give formulas for the operators $QA_0(\delta) Q^*$ and
$E_{H_0}(-a,a) A(\delta) E_{H_0}(-a,a)$
in terms of the operators $\calZ,\calZ_0,\calY,\calY_0$ introduced above.

\begin{lemma}\label{lma.d1}
Let $\varphi$ be compactly supported. 
Then for all sufficiently small $\delta>0$, 
we have
\begin{align}
QA_0(\delta) Q^*
&=
4\pi\, \Im (\calF_a^*{\Pi_\calN} \calP_{\calN,-} 
\calZ_0\calY_0\calZ_0^*\boldsymbol{\varphi_\delta}
\calP_{\calN,+}\Pi_\calN^*\calF_a),
\label{d1}
\\
E_{H_0}(-a,a) A(\delta) E_{H_0}(-a,a)
&=
4\pi\, \Im (\calF_a^*{\Pi_\calN} \calZ \calP_{\calH,-}
\calY \boldsymbol{\varphi_\delta}
\calP_{\calH,+}\calZ^*\Pi_\calN^*\calF_a).
\label{d2}
\end{align}
\end{lemma}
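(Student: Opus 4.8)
The two identities share the shape ``operator $=4\pi\,\Im B_0$'' after a single conjugation, so the overarching plan is, in each case, to exhibit an operator $B$ on the model space with $4\pi\,\Im B$ equal to the \emph{unconjugated} operator, and then to pass to the stated form by conjugating with $Q=\calF_a^*\Pi_\calN$; this last step is automatic, since $(QBQ^*)^*=QB^*Q^*$ gives $Q(\Im B)Q^*=\Im(QBQ^*)$ for every bounded $B$. Identity \eqref{d1} is then purely algebraic. I would start from the stationary representation \eqref{b5a} at $\lambda=0$, namely $S(0)-I=-2\pi i\,Z(0)Y(+i0)Z(0)^*$ with $Y(z)=V_0(I+T(z)V_0)^{-1}$ as in Section~\ref{sec.c4}, so that $I\otimes(S(0)-I)=-2\pi i\,\calZ_0\calY_0\calZ_0^*$, the operator $\calZ_0\calY_0\calZ_0^*$ being multiplication by the constant operator $Z(0)Y(+i0)Z(0)^*$ in the $\calN$-factor. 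Writing $H_{\varphi_\delta}\otimes(S(0)-I)=\calP_{\calN,-}\boldsymbol{\varphi_\delta}\calP_{\calN,+}\bigl(I\otimes(S(0)-I)\bigr)$ and using that $\calZ_0\calY_0\calZ_0^*$ commutes with $\boldsymbol{\varphi_\delta}$ and with $\calP_{\calN,\pm}$ (it acts only in $\calN$), I get $H_{\varphi_\delta}\otimes(S(0)-I)=-2\pi i\,B$ with $B=\calP_{\calN,-}\calZ_0\calY_0\calZ_0^*\boldsymbol{\varphi_\delta}\calP_{\calN,+}$. Since $A_0(\delta)=\bigl(H_{\varphi_\delta}\otimes(S(0)-I)\bigr)+\bigl(H_{\varphi_\delta}\otimes(S(0)-I)\bigr)^*=-2\pi i\,B+2\pi i\,B^*=4\pi\,\Im B$, conjugation by $Q$ yields \eqref{d1}; no restriction on $\delta$ or $\supp\varphi$ is used here.

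Formula \eqref{d2} is the analytic heart. The plan is to compute the bilinear form of the left-hand side on $f,g\in\Ran E_{H_0}(-a,a)$ and to read the right-hand side off it. First I would use that for $\delta$ small and $\varphi$ compactly supported one has $\supp\varphi_\delta\subset(-a,a)\subset\Omega$, so that $\varphi_\delta(H)$ and $\varphi_\delta(H_0)$ only probe the purely absolutely continuous parts of $H$ and $H_0$ on $(-a,a)$ (Proposition~\ref{pr1}(ii)); Stone's formula then gives $A(\delta)=\frac1{2\pi i}\int_{\bbR}\varphi_\delta(t)\bigl[D(t+i0)-D(t-i0)\bigr]\,dt$ with $D(z)=R(z)-R_0(z)$. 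Next I would insert the factorised resolvent identity $D(z)=-R_0(z)GY(z)GR_0(z)$, move one resolvent onto the $g$-slot via $R_0(z)^*=R_0(\bar z)$, and evaluate $GR_0(t\pm i0)f$ through the spectral representation \eqref{b4a} as $\int_{-a}^a Z(\mu)^*(\mu-t\mp i0)^{-1}F(\mu)\,d\mu$, $F=\calF f$. This turns the $t+i0$ contribution into a double integral in the spectral variables $\mu,\nu$ with Cauchy kernels $(\mu-t-i0)^{-1}$ and $(\nu-t-i0)^{-1}$ and with $Y(t+i0)$ in the middle.

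It then remains to recognise these kernels. By \eqref{hp} the kernels $(\,\cdot\,-t\mp i0)^{-1}$ are exactly the integral kernels of $\calP_{\calH,\pm}$ acting in the spectral variable, while $t$ becomes the intermediate variable carrying $\calY$ (multiplication by $Y(t+i0)$) and $\boldsymbol{\varphi_\delta}$; the cut-offs to $(-a,a)$ built into $\calZ$ and $\calY$ match the ranges of the $\mu,\nu,t$ integrations. To see that the whole Stone integral assembles into $4\pi\,\Im$ of a single expression $B'=\calF_a^*\Pi_\calN\calZ\calP_{\calH,-}\calY\boldsymbol{\varphi_\delta}\calP_{\calH,+}\calZ^*\Pi_\calN^*\calF_a$, I would verify that the $t-i0$ part of the Stone integral is, as a form, the adjoint of the $t+i0$ part; this reduces to the identity $Y(t+i0)^*=Y(t-i0)$, which follows from $T(z)^*=T(\bar z)$ together with $V_0(I+TV_0)^{-1}=(I+V_0T)^{-1}V_0$. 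Matching the two forms then gives $E_{H_0}(-a,a)A(\delta)E_{H_0}(-a,a)=\frac{2\pi}{i}(B'-B'^*)=4\pi\,\Im B'$, which is \eqref{d2}.

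The main obstacle is the rigorous justification of the last two steps rather than the bookkeeping: one must know that the sandwiched boundary values $GR_0(t\pm i0)f$ and $Y(t\pm i0)$ exist, and one must interchange the $t$-integration both with the $\mu,\nu$-integrations and with the $\LIM$ defining the Hardy projections. All of this rests on the H\"older continuity of $Z(\cdot)$ from \eqref{b4}, on the H\"older continuity of $Y(\cdot+i0)$, and on the invertibility of $I+T(\lambda+i0)V_0$ from Proposition~\ref{pr1} — properties that hold precisely because $\supp\varphi_\delta\subset(-a,a)\subset\Omega$. This is exactly where the hypotheses ``$\varphi$ compactly supported'' and ``$\delta$ sufficiently small'' are needed.
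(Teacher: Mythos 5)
Your proposal is correct and follows essentially the same route as the paper: \eqref{d1} via the stationary representation \eqref{b5a} and the algebraic identity $2\Re(-2\pi i B)=4\pi\Im B$, and \eqref{d2} via Stone's formula, the factorised resolvent identity $R(z)-R_0(z)=-(GR_0(\bar z))^*Y(z)GR_0(z)$, the representation \eqref{b4a} for $GR_0(z)f$, and recognition of the Cauchy kernels as those of $P_\pm$ from \eqref{hp}. The only cosmetic differences are that the paper works with the quadratic form $(A(\delta)f,f)$ and a single boundary value under $\Im$, whereas you split into the $t\pm i0$ contributions and check adjointness via $Y(t+i0)^*=Y(t-i0)$; these are equivalent.
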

\begin{proof}
1. Let us prove \eqref{d1}. 
By the stationary representation \eqref{b5a} for the scattering matrix, 
we have 
$$
S(0)=I-2\pi i Z(0)Y(+i0)Z(0)^*. 
$$
By the definition \eqref{c0} of $A_0(\delta)$, we get
\begin{multline*}
A_0(\delta)
=
2\, \Re(P_- \boldsymbol{\varphi_\delta}P_+\otimes (S(0)-I))
\\
=
4\pi\, \Im (P_- \boldsymbol{\varphi_\delta}P_+\otimes Z(0)Y(+i0)Z(0)^*)
=
4\pi\, \Im (\calP_{\calN,-} \calZ_0 \calY_0 \calZ_0^* \boldsymbol{\varphi_\delta}\calP_{\calN,+}),
\end{multline*}
and \eqref{d1} follows.

2. 
We will use the resolvent identity in the form
\begin{equation}
R(z)-R_0(z)
=
-(GR_0(\overline z))^* Y(z)GR_0(z), 
\quad \Im z>0.
\label{c12}
\end{equation}
Let us recall the derivation of \eqref{c12} (see e.g. \cite[Section~1.9]{Yafaev}). 
Iterating the usual resolvent identity, we get
\begin{align}
R(z)-R_0(z)
=
-R(z)VR_0(z)
&=
-R_0(z)VR_0(z)
+R_0(z)VR(z)VR_0(z)
\label{c12a}
\\
&=-R_0(z)GV_0(I-GR(z)GV_0)GR_0(z).
\label{c12b}
\end{align}
We also have the identity 
\begin{equation}
(I-GR(z)GV_0)(I+GR_0(z)GV_0)=I,
\label{c12c}
\end{equation}
which can be verified by expanding and using \eqref{c12a}. 
Substituting \eqref{c12c} into \eqref{c12b} and using the 
notation $Y(z)$, we obtain \eqref{c12}.

3.  Let us prove \eqref{d2}. 
Let $\delta>0$ be sufficiently small so that $\supp \varphi_\delta\subset [-a,a]$. 
First recall a version of Stone's formula:
$$
(\varphi_\delta(H) f,f)
=
\frac1\pi
\lim_{\eps\to+0}\Im \int_{-a}^a (R(x+i\eps)f,f)\varphi_\delta(x)dx,
$$
for any $f\in\calH$. 
Using this formula, a similar formula for $\varphi_\delta(H_0)$ 
and the resolvent identity \eqref{c12}, we get
\begin{equation}
(A(\delta)f,f)
=
-\frac1\pi\lim_{\eps\to+0}\Im \int_{-a}^a
(Y(x+i\eps)GR_0(x+i\eps)f,GR_0(x-i\eps)f)\varphi_\delta(x)dx,
\label{c14}
\end{equation}
for any $f\in\calH$. 

Next, let $f\in\Ran E_{H_0}(-a,a)$ and $F=\calF_a f$. 
From \eqref{b4a} we obtain for any  $\Im z\not=0$
\begin{equation}
GR_0(z)f
=
\int_{-a}^a \frac{Z(t)^*F(t)}{t-z} dt.
\label{c15}
\end{equation}
Combining \eqref{c14} and \eqref{c15}, we obtain
\begin{gather*}
(A(\delta)f,f)
=
4\pi \lim_{\eps\to+0}\Im \int_{-a}^a dx  \int_{-a}^a dt \int_{-a}^a ds 
\, (M(x,t,s)F(t),F(s)),
\\
M(x,t,s)
=
Z(s)\frac{1}{2\pi i}\frac{1}{s-x-i\eps}Y(x+i\eps)\varphi_\delta(x)
\left(-\frac{1}{2\pi i}\right)\frac{1}{x-t+i\eps}Z(t)^*.
\end{gather*}
Recalling formula \eqref{hp} for $P_\pm$,  
we obtain the required identity
\eqref{d2}. 
\end{proof}

\subsection{Compactness lemma}\label{sec.c6}

\begin{lemma}\label{lma.c4}
The operators 
\begin{equation}
L^\pm
=
{\Pi_\calN}(\calZ \calP_{\calH,\pm}
-
\calP_{\calN,\pm}\calZ)\Pi_\calH^*: 
L^2((-a,a),\calH)\to L^2((-a,a),\calN)
\label{d7}
\end{equation}
are compact. 
\end{lemma}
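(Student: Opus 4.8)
The plan is to write $L^\pm$ as an explicit integral operator and then prove compactness of its kernel by combining the H\"older bound \eqref{b4} (to control the diagonal singularity) with the compactness of the values $Z(x)$ (to produce a compact approximant). First I would unwind the definitions: $\Pi_\calH^*$ extends an input $g\in L^2((-a,a),\calH)$ by zero to the line, and then, using formula \eqref{hp} for $P_\pm$ together with the pointwise action $(\calZ f)(x)=\chi_{(-a,a)}(x)Z(x)f(x)$, a direct computation gives, for $x\in(-a,a)$,
\[
(L^\pm g)(x)=\mp\frac1{2\pi i}\int_{-a}^a\frac{(Z(x)-Z(y))g(y)}{x-y}\,dy .
\]
Thus $L^\pm$ is the integral operator with the compact-operator-valued kernel $K^\pm(x,y)=\mp\tfrac1{2\pi i}(Z(x)-Z(y))/(x-y)\in\calK(\calH,\calN)$. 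The estimate $\norm{Z(x)-Z(y)}\le C\abs{x-y}^\alpha$ from \eqref{b4} cancels the singularity, so $\norm{K^\pm(x,y)}\le C'\abs{x-y}^{\alpha-1}$, which is integrable in each variable over $(-a,a)$; in particular the integral converges absolutely and the $\pm i0$ prescription in \eqref{hp} is irrelevant.

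Next I would split the kernel at a scale $\eta>0$ into a near-diagonal part ($\abs{x-y}<\eta$) and a far-diagonal part ($\abs{x-y}\ge\eta$), letting $L^\pm_\eta$ denote the far-diagonal operator. For the near-diagonal part the Schur test gives
\[
\sup_x\int_{\abs{x-y}<\eta}\norm{K^\pm(x,y)}\,dy\le C'\int_{-\eta}^{\eta}\abs{u}^{\alpha-1}\,du=\tfrac{2C'}{\alpha}\eta^\alpha,
\]
and symmetrically in $y$, so this part has operator norm $O(\eta^\alpha)$. Hence $\norm{L^\pm-L^\pm_\eta}=O(\eta^\alpha)\to0$ as $\eta\to0$, and it suffices to show that $L^\pm_\eta$ is compact for each fixed $\eta>0$, since then $L^\pm$ is the operator-norm limit of compact operators.

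To see that $L^\pm_\eta$ is compact I would use that $Z(\cdot)$ is norm-continuous on the compact interval $[-a,a]$, so $\{Z(x):\abs{x}\le a\}$ is a compact subset of $\calK(\calH,\calN)$. A standard argument then yields, for every $\eps>0$, finite-rank projections $Q$ in $\calN$ and $Q'$ in $\calH$ with $\sup_x\norm{Z(x)-QZ(x)Q'}<\eps$; set $Z_\eps(x)=QZ(x)Q'$. Replacing $Z$ by $Z_\eps$ in the far-diagonal kernel changes it pointwise by $O(\eps/\eta)$, so by the Schur test the corresponding operators differ by $O(a\eps/\eta)$, which tends to $0$ as $\eps\to0$ for fixed $\eta$. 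On the other hand $Z_\eps$ takes values in the finite-dimensional space of operators $Q'\calH\to Q\calN$, so the far-diagonal kernel built from $Z_\eps$ is bounded and takes values in a fixed finite-dimensional space over the finite-measure square $(-a,a)^2$; hence it is Hilbert--Schmidt and defines a compact operator. Therefore $L^\pm_\eta$ is an operator-norm limit of compact operators, so it is compact, and letting $\eta\to0$ shows that $L^\pm$ is compact.

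The main obstacle is the interplay between the two mechanisms at work: the H\"older bound \eqref{b4} is exactly what tames the diagonal singularity through the Schur estimate, while the mere compactness — rather than Hilbert--Schmidt membership — of the values $Z(x)$ is what forces the finite-rank reduction, a direct Hilbert--Schmidt estimate on $K^\pm$ being unavailable (and in any event requiring $\alpha>1/2$). Keeping these separate, by truncating near the diagonal first and only then approximating $Z$ by finite-rank-valued functions on the far-diagonal region, is the key organisational point; the same computation handles $L^+$ and $L^-$ simultaneously since $\norm{K^+(x,y)}=\norm{K^-(x,y)}$.
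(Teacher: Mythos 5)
Your proof is correct, but it organises the argument differently from the paper, and it is worth seeing how. Both proofs rest on the same first ingredient: a uniform finite\-/rank approximation $Z_\eps(x)$ of $Z(x)$ on $[-a,a]$, obtained from norm\-/continuity of $Z(\cdot)$ and compactness of its values. The divergence is in how the diagonal singularity of the kernel $(Z(x)-Z(y))/(x-y)$ is handled. The paper pushes the approximation one step further: after the finite\-/rank reduction it invokes Weierstrass to make $Z_\eps$ an operator\-/valued \emph{polynomial}, so that the difference quotient is a smooth finite\-/rank\-/valued kernel and the approximating operator is compact outright; crucially, the error $L^\pm-L^\pm_\eps$ is then estimated at the operator level, $\norm{\Pi_\calN(\wt\calZ_\eps\calP_{\calH,\pm}-\calP_{\calN,\pm}\wt\calZ_\eps)\Pi_\calH^*}\le 2\norm{\wt\calZ_\eps}\le2\eps$, using only $\norm{P_\pm}=1$ and never the kernel representation. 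As a result the paper's proof does not use the H\"older bound \eqref{b4} at all, only continuity of $Z$. You instead keep $Z$ as it is, use \eqref{b4} to get the Schur\-/integrable bound $\norm{K^\pm(x,y)}\le C\abs{x-y}^{\alpha-1}$, truncate near the diagonal, and do the finite\-/rank reduction only on the far\-/diagonal part, where the kernel is bounded and hence Hilbert--Schmidt after the reduction. This is a two\-/parameter argument ($\eta$ then $\eps$) where the paper's is a one\-/parameter one, and it consumes the H\"older hypothesis that the paper's proof of this particular lemma leaves untouched; on the other hand your route makes the kernel structure and the role of each hypothesis completely explicit, and your remark that a direct Hilbert--Schmidt estimate would require $\alpha>1/2$ correctly identifies why the finite\-/rank reduction cannot be dispensed with. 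The only point you pass over lightly is the identification of $L^\pm$ with the absolutely convergent integral operator, i.e.\ removing the $\pm i\eps$ regularisation in \eqref{hp}; this does need the H\"older cancellation on the numerator (a bound of the form $\min(C\eps\abs{x-y}^{\alpha-2},\,C\abs{x-y}^{\alpha-1})$ and a Schur estimate gives an $O(\eps^\alpha)$ discrepancy), but it is routine and does not affect the validity of the argument.
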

\begin{proof}
Clearly, it suffices to prove that for any $\eps>0$ 
the operator $L^\pm$ can be represented as 
$L^\pm=L^\pm_\eps+\wt L^\pm_\eps$, where
$L^\pm_\eps$ is compact and $\norm{\wt L^\pm_\eps}\leq \eps$.

1. Let us prove that for any $\eps>0$ one can find
an operator valued polynomial 
$Z_\eps(x)$  with coefficients being 
finite rank operators from $\calH$ to $\calN$ and such that 
$\norm{Z_\eps(x)-Z(x)}\leq \eps $ for all $x\in[-a,a]$. 
Let $P_{n,\calH}$ be a sequence of orthogonal 
projections in $\calH$ of finite rank which converges strongly 
to the identity operator $I_\calH$. Let $P_{n,\calN}$ be a similar
sequence for the space $\calN$. 
For each $x\in[-a,a]$, by compactness of $Z(x)$ we have
$$
\norm{Z(x)(P_{n,\calH}-I_\calH)}\to0, 
\quad
\norm{(P_{n,\calN}-I_\calN)Z(x)}\to0,
$$
and therefore, by the compactness of the interval $[-a,a]$, 
the above convergence holds true uniformly over $x\in[-a,a]$. 
It follows that for a sufficiently large $n$ we have 
$$
\norm{
P_{n,\calN}Z(x)P_{n,\calH}-Z(x)
}\leq \eps/2, 
\quad x\in[-a,a]. 
$$
The operator 
$P_{n,\calN}Z(x)P_{n,\calH}$ 
can be thought of as a matrix with respect to some bases
in $\Ran P_{n,\calN}$ and $\Ran P_{n,\calH}$; the elements of this matrix
are continuous functions in $x$. 
By the Weierstrass approximation theorem, 
the elements of this matrix can be approximated by polynomials
uniformly on $[-a,a]$. 
This yields the required approximation $Z_\eps(x)$ of $Z(x)$.  

2. 
For $Z_\eps$ as above, let us write $Z=Z_\eps+\wt Z_\eps$, 
with  $\norm{\wt Z_\eps(x)}\leq \eps$ for all $x\in[-a,a]$. 
This generates a decomposition $L^\pm= L^\pm_\eps+\wt L^\pm_\eps$
with 
$$
\norm{\wt L^\pm_\eps}
=
\norm{
{\Pi_\calN}(\wt \calZ_\eps \calP_{\calH,\pm}
-
\calP_{\calN,\pm}\wt \calZ_\eps)\Pi_\calH^*
}
\leq 
2 \norm{\wt \calZ_\eps}
\leq
2\eps.
$$ 
It suffices to prove that $L^\pm_\eps$ is compact. 
The operator $L^\pm_\eps$ is an integral operator
with the kernel 
$$
L^\pm_\eps(x,y)=\mp\frac1{2\pi i}\frac{Z_\eps(x)-Z_\eps(y)}{x-y}.
$$
This is a smooth matrix valued kernel, and therefore 
$L^\pm_\eps$ is compact. 
\end{proof}

\subsection{Proof of \eqref{c1}}\label{sec.c7}

1. First let us prove \eqref{c1} for compactly supported $\varphi$. 
By \eqref{c9a}, it suffices to prove that 
\begin{equation}
\norm{E_{H_0}(-a,a)A(\delta)E_{H_0}(-a,a)-QA_0(\delta)Q^*}
\to0, 
\quad \delta\to+0.
\label{d2a}
\end{equation}
Since $\varphi$ is compactly supported, we can apply 
Lemma~\ref{lma.d1} to represent  the operators in \eqref{d2a}
in terms of $\calZ$, $\calY$, etc. 
Thus, we see that \eqref{d2a} will follow from 
\begin{equation}
\norm{
{\Pi_\calN} \calZ\calP_{\calH,-}\calY\boldsymbol{\varphi_\delta}
\calP_{\calH,+}\calZ^*\Pi_\calN^*
-
{\Pi_\calN} \calP_{\calN,-} 
\calZ_0\calY_0\calZ_0^*
\boldsymbol{\varphi_\delta}
\calP_{\calN,+}\Pi_\calN^*
}\to0, \quad \delta\to+0.
\label{d3}
\end{equation}
Below we prove \eqref{d3}.

2. Note that $\boldsymbol{\varphi_\delta}$ 
converges to zero strongly as $\delta\to+0$. 
Next, if $\delta$ is sufficiently small 
so that $\supp\varphi_\delta\subset[-a,a]$, we have
$$
{\Pi_\calN}
(\calZ\calP_{\calH,-}
-
\calP_{\calN,-}\calZ)\boldsymbol{\varphi_\delta}
=
{\Pi_\calN}
(\calZ\calP_{\calH,-}
-
\calP_{\calN,-}\calZ)\Pi_\calH^*{\Pi_\calH}\boldsymbol{\varphi_\delta}
=
L^{-}{\Pi_\calH}\boldsymbol{\varphi_\delta},
$$
where $L^-$ is defined in \eqref{d7}. 
Thus by Lemma~\ref{lma.c4}, 
\begin{equation}
\norm{{\Pi_\calN}(\calZ\calP_{\calH,-} -\calP_{\calN,-}\calZ)\boldsymbol{\varphi_\delta}}
\to0, \quad \delta\to+0.
\label{d4}
\end{equation}
Writing a similar relation for $L^+$ instead of $L^-$ and 
taking adjoints, one obtains 
\begin{equation}
\norm{\boldsymbol{\varphi_\delta}(\calP_{\calH,+}\calZ^*-\calZ^*\calP_{\calN,+})\Pi_\calN^*}
\to0, \quad \delta\to+0.
\label{d5}
\end{equation}
Combining \eqref{d4} and \eqref{d5} and using
the commutation  
$\boldsymbol{\varphi_\delta}\calY=\calY\boldsymbol{\varphi_\delta}$, 
we obtain
\begin{equation}
\norm{
{\Pi_\calN} \calZ\calP_{\calH,-}\calY\boldsymbol{\varphi_\delta}
\calP_{\calH,+}\calZ^*\Pi_\calN^*
-
{\Pi_\calN} \calP_{\calN,-}\calZ\calY
\boldsymbol{\varphi_\delta}\calZ^*\calP_{\calN,+}\Pi_\calN^*}
\to0, 
\quad \delta\to+0.
\label{d6}
\end{equation}
Recall that by Proposition~\ref{pr1} the operator $Y(x+i0)$ is continuous
in $x\in[-a,a]$. Using this fact and the 
continuity of $Z(x)$ in $x$ we get
$$
\norm{
\calZ\calY\boldsymbol{\varphi_\delta}\calZ^*
-
\calZ_0\calY_0\boldsymbol{\varphi_\delta}\calZ_0^*
}
\to0, 
\quad \delta\to+0.
$$
Combining the last relation with \eqref{d6} 
and using the commutation 
$\boldsymbol{\varphi_\delta}\calZ_0=\calZ_0\boldsymbol{\varphi_\delta}$,
we arrive at \eqref{d3}.

3. 
It remains to extend \eqref{c1} from compactly 
supported functions to general $\varphi\in C_0(\bbR)$. 
For $\varphi\in C_0(\bbR)$ and any given $\eps>0$ 
let us approximate $\varphi$ by 
a compactly supported function $\wt \varphi$ such that 
$\norm{\varphi-\wt\varphi}_{C(\bbR)}\leq \eps$. 
Let $\wt A(\delta)$, $\wt A_0(\delta)$ be the operators
$A(\delta)$, $A_0(\delta)$ corresponding to $\wt \varphi$. 
Then by steps 1,2 above we have $\norm{\wt A(\delta)-Q \wt A_0(\delta) Q^*}\to0$
as $\delta\to+0$. 
Next, directly from the definition of $A(\delta)$ and $A_0(\delta)$ we obtain 
\begin{align*}
\norm{A(\delta)-\wt A(\delta)}&\leq 2\norm{\varphi-\wt \varphi}_{C(\bbR)}\leq 2\eps,
\\
\norm{A_0(\delta)-\wt A_0(\delta)}&\leq 4\norm{\varphi-\wt \varphi}_{C(\bbR)}\leq 4\eps.
\end{align*}
It follows that
$$
\limsup_{\delta\to+0}\norm{A(\delta)-Q A_0(\delta) Q^*}\leq 6\eps.
$$
Since $\eps>0$ is arbitrary, we get \eqref{c1}.

\qed

\end{document}